\begin{document}

\title[Embeddability of countably branching trees]{On the embeddability of the family of countably branching trees into quasi-reflexive Banach spaces}

\author{Y.~Perreau}
\address{Yoël Perreau, Laboratoire de Math\'ematiques de Besan\c con, Universit\'e Bourgogne Franche-Comt\'e, CNRS UMR-6623, 16 route de Gray, 25030 Besan\c con C\'edex, Besan\c con, France}
\email{yoel.perreau@univ-fcomte.fr}

\thanks{The author is supported by the French ``Investissements d'Avenir'' program, project ISITE-BFC (contract
 ANR-15-IDEX-03).}
\keywords{}
\subjclass[2010]{46B20, 46B80, 46B85, 46T99}

\maketitle

\begin{abstract}

In this note we extend to the quasi-reflexive setting the result of F. Baudier, N. Kalton and G. Lancien concerning the non-embeddability of the family of countably branching trees into reflexive Banach spaces whose Szlenk index and Szlenk index from the dual are both equal to the first infinite ordinal $\omega$.  In particular we show that the family of countably branching trees does neither embed into the James space $\mathcal{J}_p$ nor into its dual space $\mathcal{J}_p^*$ for $p\in(1,\infty)$.

\end{abstract}

\section{Introduction}

Let $(T_N)_{N\geq 1}$ be the family of countably branching trees endowed with the hyperbolic distance. The main result in the present paper is the following theorem.

\begin{thm}

Let $X$ be a quasi-reflexive Banach space. If the Szlenk index $S_Z(X)$ of the space $X$ satisfies $S_Z(X)\leq\omega$ and if the Szlenk index $S_Z(X^*)$ of its dual space  satisfies $S_Z(X^*)\leq \omega$, then the family $(T_N)_{N\geq 1}$ does not equi-Lipschitz embed into $X$.

\end{thm}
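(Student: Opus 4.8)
The plan is to argue by contradiction. Suppose $(T_N)_{N\geq 1}$ equi-Lipschitz embeds into $X$: there are $D\geq 1$ and maps $f_N\colon T_N\to X$ with $d(s,t)\leq\|f_N(s)-f_N(t)\|\leq D\,d(s,t)$ for all $s,t\in T_N$, where $d$ is the hyperbolic distance, after rescaling so that the lower constant equals $1$. As each $f_N(T_N)$ is countable, we may pass to the closed span of all the images and assume $X$ is separable (both Szlenk hypotheses and quasi-reflexivity pass to this subspace). The Szlenk hypotheses enter through a renorming. By the classical characterization of $S_Z(X)\leq\omega$ in terms of asymptotic uniform smoothness, together with its quantitative refinement, $X$ admits an equivalent norm with $\overline{\rho}_X(t)\leq C\,t^{p}$ for some $p\in(1,\infty)$. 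Similarly $S_Z(X^{*})\leq\omega$ makes $X^{*}$ asymptotically uniformly smoothable with power type; since $X$ is quasi-reflexive it has finite codimension in $X^{**}=(X^{*})^{*}$, and the duality between asymptotic uniform smoothness of $X^{*}$ and (weak$^{*}$-)asymptotic uniform convexity of $X^{**}$ transfers to $X$, yielding an equivalent norm with $\overline{\delta}_X(t)\geq c\,t^{q}$ for some $q\in(1,\infty)$. By the standard renorming results for these Szlenk conditions we may assume that a single equivalent norm on $X$ realizes both power-type estimates simultaneously. Roughly, $S_Z(X)\leq\omega$ forbids $\ell_{1}$-like asymptotic spreading and $S_Z(X^{*})\leq\omega$ forbids $c_{0}$-like asymptotic behaviour; since $(T_N)_{N\geq 1}$ embeds into $c_{0}$, and with bounded distortion into $\ell_{1}$-like spaces such as the original Tsirelson space, both hypotheses are genuinely needed.

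Fix $N$, to be chosen large, and write $f=f_N$. For a non-root node $t=(n_{1},\dots,n_{k})$ set $u(t)=f(t)-f(t^{-})$, so that $1\leq\|u(t)\|\leq D$ and $f(n_{1},\dots,n_{N})-f(\emptyset)=\sum_{k=1}^{N}u(n_{1},\dots,n_{k})$. Bounded subsets of $X$ are relatively weak$^{*}$-compact in $X^{**}=X\oplus E$, where $E$ is a fixed finite-dimensional complement, and by quasi-reflexivity the weak$^{*}$-limits of bounded sequences from $X$ differ from vectors of $X$ only by elements of $E$; this is the substitute — available here — for the weak compactness of $B_{X}$ used in the reflexive case. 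By an iterated Ramsey/weak$^{*}$-limit argument, after passing to an infinite subset $\mathbb{M}\subseteq\mathbb{N}$, one arranges that, as coordinates are pushed to infinity within $\mathbb{M}$ one level at a time, each $u(n_{1},\dots,n_{k})$ splits as a barycentric part depending only on the level $k$, plus a sum of weak$^{*}$-null layers, one attached asymptotically to each coordinate $n_{j}$; moreover, using $\dim E<\infty$ and a further pigeonhole step, the $E$-components may be made to contribute, along any admissible branch, only an additive amount bounded independently of $N$. The structural role of $S_Z(X^{*})\leq\omega$ here is that, for each fixed level, the layers form a successive weak$^{*}$-null family whose norms have $\ell_{q}$-sum at most a fixed multiple of $D$ — exactly what $\overline{\delta}_X(t)\geq c\,t^{q}$ gives, since their sum is $u(n_{1},\dots,n_{k})$ minus a barycentre and so has norm at most $2D$.

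The heart is then a concentration inequality. Let $\bar n,\bar m\in[\mathbb{M}]^{N}$ share an initial segment of length $N-j$ and diverge afterwards, so $d(\bar n,\bar m)=2j$ and hence $\|f(\bar n)-f(\bar m)\|\geq 2j$. In $f(\bar n)-f(\bar m)$ the barycentric parts and the layers attached to the shared coordinates cancel exactly, leaving — up to the bounded $E$-error — a sum of $O(j)$ successive weak$^{*}$-null blocks. Feeding the asymptotic form of $\overline{\rho}_X(t)\leq C\,t^{p}$ (a sum of $M$ successive weak$^{*}$-null vectors of norm at most $a$ has norm at most $C_{1}\,a\,M^{1/p}$) into this block decomposition, and using the $\ell_{q}$-smallness of the layers within each level to keep the block norms from accumulating, one obtains constants $A,B$ depending only on $X$ and an exponent $\theta<1$ built from $p$ and $q$ with $\|f(\bar n)-f(\bar m)\|\leq A\,D\,j^{\theta}+B$. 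Taking $j=N$, i.e. $\bar n,\bar m$ fully interlaced with $d(\bar n,\bar m)=2N$, and combining with the lower Lipschitz bound gives $2N\leq A\,D\,N^{\theta}+B$, which is false for $N$ large. This contradiction proves the theorem. The main obstacle, relative to the reflexive case of Baudier--Kalton--Lancien, is to run the stabilization inside $X^{**}$ rather than inside a weakly compact subset of $X$: one must verify that the asymptotic AUS and AUC inequalities — really statements about the weak$^{*}$ asymptotic structure of $X^{**}$ relative to $X^{*}$ — still control the weak$^{*}$-null layers, and, above all, that the finite-dimensional defect $E$ costs only an additive $O(1)$ and not an error growing with $N$; a secondary point is securing the simultaneous AUS/AUC renorming of $X$ in the non-reflexive setting.
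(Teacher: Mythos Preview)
Your outline has the right setup and the right qualitative ingredients, but the decisive step --- the concentration inequality $\|f(\bar n)-f(\bar m)\|\le A\,D\,j^{\theta}+B$ with $\theta<1$ --- is asserted, not proved, and I do not see how to extract it from what you wrote. After cancelling the shared layers, the difference $f(\bar n)-f(\bar m)$ is a double sum $\sum_{k=N-j+1}^{N}\sum_{i=N-j+1}^{k}\bigl(z_{k,i}(\bar n)-z_{k,i}(\bar m)\bigr)$; grouping by the ``coordinate index'' $i$ gives $j$ successive weak$^*$-null blocks, but each block has norm that can grow with $N$, while grouping by $k$ gives $j$ blocks of bounded norm that are not successive weak$^*$-null in any useful order. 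The $\ell_q$-smallness of the layers is a statement at fixed $k$, and the $p$-AUS upper bound needs successive weak$^*$-null blocks of controlled norm; you have not said how to interleave the two so as to produce a power $j^{\theta}$. The paper does \emph{not} prove such a concentration inequality: it introduces an auxiliary exponential block structure $w_{k,r}$ (with indices $\alpha_{k,r}=k-Q^{r}$) and derives incompatible upper and lower bounds on $\sum_{k}\sum_{r}\|w_{k,r}(t)\|$, the lower one coming from a Ramsey step that forces $\|\sum_{k=l+1}^{l+Q^{m}}w_{k,m}(t)\|\ge \tfrac12 Q^{m}$. That mechanism is what is missing from your sketch.

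Two further points. First, the claim that a \emph{single} equivalent norm realizes both $\overline{\rho}_X(t)\le C t^{p}$ and $\overline{\delta}_X(t)\ge c t^{q}$ is not a ``standard renorming result'' in this generality; the paper sidesteps it entirely by working with two equivalent norms, the $p$-AUS norm $\|\cdot\|$ on $X$ and the dual $|\cdot|$ of a $q^{*}$-AUS norm on $X^{*}$ (so that $(X^{**},|\cdot|)$ is $q$-AUC$^{*}$), and carries the equivalence constant $e$ through the estimates. Second, your treatment of the finite-dimensional defect $E$ is too coarse: saying the $E$-contribution along a branch is ``additive $O(1)$'' is not what is needed for the $p$-AUS step, which is applied iteratively to $\sum_l D_l^{(X)}$ and requires $\|D_l^{(E)}\|$ to be small termwise. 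The paper obtains this by a Ramsey concentration in $E$ making $\|e_{k,j}(u)-e_{k,j}(v)\|\le \nu/N$ for all $u,v$.
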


Let us briefly recall the context and the motivation of this theorem. In 1986, J. Bourgain gave in his paper \cite{Bourgain} a metric invariant characterizing super-reflexivity: the non equi-Lipschitz embeddability of the family $(D_N)_{N\geq 1}$ of dyadic trees endowed with the hyperbolic distance. His result is the following.

\begin{thm}

Let $X$ be a Banach space. Then $X$ is super-reflexive if and only if the family $(D_N)_{N\geq 1}$  does not equi-Lipschitz embed into $X$.

\end{thm}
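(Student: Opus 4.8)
The statement is an equivalence, and I would establish the two implications separately, each by contraposition, reducing in both cases to classical renorming theory for super-reflexive spaces.

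\emph{If $X$ is not super-reflexive, then $(D_N)_{N\ge1}$ equi-Lipschitz embeds into $X$.} By James' quantitative characterisation of the failure of super-reflexivity, for every $\varepsilon>0$ and every $N\ge1$ the space $X$ contains finite sequences whose partial-sum structure is $(1-\varepsilon)$-equivalent to that of an $\ell_1$-type tree of height $N$; placing at each vertex $t\in D_N$ the vector obtained by adding up, along the branch from the root to $t$, the corresponding $\pm$-copies of these elements, one obtains a family $(x_t)_{t\in D_N}$ for which geodesic distances in $D_N$ become, up to the factor $1-\varepsilon$, genuine sums of increments of norm close to $1$, so that $(1-\varepsilon)\,d(s,t)\le\|x_s-x_t\|\le(1+\varepsilon)\,d(s,t)$. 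Taking $\varepsilon=\tfrac12$ gives an equi-Lipschitz (indeed asymptotically isometric) embedding of the whole family; I would simply invoke this construction, which is the one in Bourgain's paper.

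\emph{If $X$ is super-reflexive, then $(D_N)_{N\ge1}$ does not equi-Lipschitz embed into $X$.} By Enflo's theorem $X$ has an equivalent uniformly convex norm, and by Pisier's renorming theorem we may assume this norm has modulus of convexity of power type $q$, say $\delta_X(\varepsilon)\ge c\,\varepsilon^q$ for some $q\in[2,\infty)$ and $c>0$ (and, by duality, a modulus of smoothness of power type $p>1$, convenient on the martingale side). Suppose for contradiction that there are equi-Lipschitz embeddings $f_N\colon D_N\to X$; after rescaling, $\tfrac1C\,d(s,t)\le\|f_N(s)-f_N(t)\|\le d(s,t)$ for all $N$ and all $s,t$, with $C$ independent of $N$. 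The heart of the matter is a quantitative convexity estimate on the tree. Running over a uniformly random branch from the root to a leaf, say $t_0,t_1,\dots,t_N$, and setting $Z_k=f_N(t_k)$, one splits each step $Z_{k+1}-Z_k$ into a mean-zero jump, of size $\tfrac12\|f_N(u_0)-f_N(u_1)\|\ge\tfrac1C$ where $u_0,u_1$ are the two children of $t_k$, and a drift term $\tfrac12\bigl(f_N(u_0)+f_N(u_1)\bigr)-f_N(t_k)$; the power-$q$ convexity inequality bounds the drift in terms of the jump size, while a martingale-type inequality controls the martingale built from the jumps. Iterating these local estimates over the $N$ levels forces a suitable average of $\|f_N(\mathrm{leaf})-f_N(\mathrm{root})\|$ to be $o(N)$ — concretely of order $N/(\log N)^{1/q}$, up to a constant depending only on $C,c,q$ — which contradicts the co-Lipschitz lower bound $\ge N/C$ as soon as $N$ is large; the contradiction is quantitative, bounding the admissible heights $N$ in terms of $C$ and the moduli. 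This step can also be packaged invariantly: super-reflexive (hence $q$-uniformly convex) Banach spaces are Markov $q$-convex in the sense of Lee--Naor--Peres, the dyadic trees $D_N$ are not Markov $q$-convex with a uniform constant (their Markov convexity constant is unbounded in $N$, growing like a power of $\log N$), and Markov $q$-convexity is a bi-Lipschitz invariant whose constant is controlled by the distortion.

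I expect the renorming inputs — Enflo's and Pisier's theorems and James' finite-tree characterisation — to enter as black boxes, and the ``not super-reflexive $\Rightarrow$ embeds'' direction to reduce, once James' tree-like sequences are available, to routine bookkeeping with additive metrics. The genuine obstacle is the quantitative convexity estimate: one must arrange that the defects coming from the per-vertex uniform-convexity inequality accumulate over the $N$ levels \emph{even though no single ``fork'' in the embedded tree need be anywhere near extremal}. This is precisely where averaging over random branches (the martingale / Markov-chain viewpoint) and the power-type — rather than merely qualitative — nature of the convexity modulus do the work, and where obtaining the correct exponent in the $N/(\log N)^{1/q}$ bound, hence a clean contradiction for all large $N$, is delicate.
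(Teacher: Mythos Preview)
The paper does not prove this theorem. It is quoted in the introduction as Bourgain's 1986 result, with a citation, and the paper adds only that Kloeckner later gave a short proof of the non-embedding direction ``using uniform convexity and a self-improvement argument''. There is therefore no in-paper proof to compare your proposal against.

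On the substance: the embedding direction via James' finite-tree characterisation is the standard route and is correct. For the non-embedding direction, the Markov $q$-convexity packaging you give at the end (super-reflexive $\Rightarrow$ Markov $q$-convex for some $q$; the Markov convexity constants of $D_N$ blow up like a power of $\log N$; Markov convexity is a bi-Lipschitz invariant) is clean and correct. The random-branch martingale paragraph that precedes it, however, is garbled. Uniform convexity does not ``bound the drift in terms of the jump'', and the contradiction cannot come from the average of $\|f_N(\mathrm{leaf})-f_N(\mathrm{root})\|$ being $o(N)$: that quantity is $\ge N/C$ for every leaf by the very co-Lipschitz hypothesis you just imposed, so no averaging will make it small. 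What the $q$-uniform-convexity fork inequality actually produces is a \emph{pair} of vertices at tree distance $d$ whose images are at distance $\lesssim d/(\log N)^{1/q}$; this is where the distortion lower bound, and hence the contradiction, comes from. Neither of your two packagings is the Kloeckner self-improvement argument the paper refers to, which recurses directly on the fork inequality over subtree heights without any random branches or martingales.
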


This was the first step in the so called \emph{Ribe program}  which looks for metric invariants characterizing \emph{local properties} of Banach spaces. The reader can have a look at \cite{Naor} for a detailed introduction to the Ribe program and for a survey of results in this direction. A short proof of the non-embeddability of the family of dyadic trees into a super-reflexive space was given more recently by R. Kloeckner in \cite{Kloeckner} using uniform convexity and a self-improvement argument.

In \cite{BKL}, F. Baudier, N. J. Kalton and G. Lancien introduced a new metric invariant in order to give a metric characterization of \emph{asymptotic properties} of Banach spaces: the non equi-Lipschitz embeddability of the family $(T_N)_{N\geq 1}$ of countably branching trees endowed with the hyperbolic distance. The main tool in their paper is a derivation index called Slzenk index. We will introduce these objects in section $2$. They proved the following results.

\begin{thm}

Let $X$ be a separable Banach space. If $S_Z(X)>\omega$ or if $S_Z(X^*)>\omega$, then the family $(T_N)_{N\geq 1}$ equi-Lipschitz embeds into $X$.

\end{thm}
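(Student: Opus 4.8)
The plan is to prove the two possibilities, $S_Z(X)>\omega$ and $S_Z(X^*)>\omega$, separately, and in each case to convert the non-termination of the Szlenk derivation into a bi-Lipschitz copy of $T_N$ inside $X$ whose distortion is bounded independently of $N$. The underlying reduction is a single canonical construction: given a normalized tree $(x_v)_v$ in a Banach space indexed by the nodes of a copy of $T_N$, map $t=(k_1,\dots,k_j)$ to $\Phi(t)=\sum_{i=1}^{j}x_{(k_1,\dots,k_i)}$. If $s,t$ have greatest common ancestor $u$ of length $\ell$, then $\Phi(s)-\Phi(t)=\sum_{i=\ell+1}^{|s|}x_{s|_i}-\sum_{i=\ell+1}^{|t|}x_{t|_i}$ is a signed combination of the vectors indexed by the geodesic from $s$ to $t$; the triangle inequality gives $\|\Phi(s)-\Phi(t)\|\le d(s,t)$, and if the tree satisfies a uniform lower estimate $\|\sum_v a_v x_v\|\ge c\sum_v|a_v|$ whenever $v$ ranges over a geodesic segment, then $\|\Phi(s)-\Phi(t)\|\ge c\,d(s,t)$, so $\Phi$ has distortion $\le 1/c$. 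Hence it suffices to produce, for every $N$, such a tree in $X$ with $c$ independent of $N$ (or, in the second case, the appropriate variant of this).

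Case $S_Z(X)>\omega$. Fix $\varepsilon_0>0$ with $s_{\varepsilon_0}^{n}(B_{X^*})\neq\emptyset$ for all $n<\omega$. Reading off the Szlenk derivation produces, for each $n$, a tree of depth $n$ of weak$^*$-convergent, $\varepsilon_0$-separated sequences in $B_{X^*}$. From this I would extract a normalized weakly null tree $(x_v)_v$ of height $n$ in $X$ with the geodesic lower $\ell_1$-estimate above, for a constant $c=c(\varepsilon_0)$ not depending on $n$: choose along the derivation tree vectors of $B_X$ norming the successive $\varepsilon_0$-separated differences of functionals, pass to subtrees so that these vectors are nearly weakly null, and use a gliding-hump (Bessaga--Pe\l czy\'nski) argument to upgrade weak$^*$-convergence of the functionals plus $\varepsilon_0$-separation into the lower $\ell_1$-estimate for the vectors along geodesics; a Ramsey/diagonal selection organizes the countable branching at each node with a single constant. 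Feeding this tree into $\Phi$ gives the desired equi-Lipschitz embedding. (When $X^*$ is non-separable this case also covers spaces such as $\ell_1$, for which one may instead use the literal $\ell_1$-basis indexed by the nodes of $T_N$, no weak nullity being needed.)

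Case $S_Z(X^*)>\omega$. Here the derivation takes place on $B_{X^{**}}$ for the $\sigma(X^{**},X^*)$-topology, and the same reading gives, for each $n$, a depth-$n$ tree of $\sigma(X^{**},X^*)$-convergent, $\varepsilon_0$-separated sequences in $B_{X^{**}}$. The new feature is that the embedding must land in $X$, not in $X^{**}$: using Goldstine's theorem and the principle of local reflexivity one first transfers the derivation tree into $X$, replacing each bidual element by a nearby element of $B_X$ within a prescribed finite set of functionals, and then — after a further subtree extraction making the errors summable — builds, by a James-type near-biorthogonality construction, auxiliary functionals witnessing the separation, so that an appropriate canonical map $T_n\to X$ (the analogue of $\Phi$ relative to this structure, equivalently the relativization of the standard equi-Lipschitz embedding of $(T_N)$ into $c_0$) is again bi-Lipschitz with constant uniform in $n$.

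The main obstacle in both cases is the extraction step: turning the purely ordinal information ``$s_{\varepsilon_0}^{n}(\cdot)\neq\emptyset$ for all $n$'' into a tree in $X$ (or $X^{**}$) whose branch and geodesic geometry is controlled \emph{uniformly in $n$}. Three points need care: (i) producing honestly weakly null, resp. genuinely $X$-valued, vectors out of the weak$^*$-convergent data in the derivation tree, which costs a subtree/subsequence extraction; (ii) converting $\varepsilon_0$-separation into a true lower $\ell_1$-estimate along all geodesic segments rather than merely along single branches — a blocking/stabilization argument, and the place where the asymptotic (rather than merely local) nature of the Szlenk index is used; and (iii) handling the countable branching at every node so that a single constant $c$ survives, via a Ramsey-type selection. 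Once such a tree of each finite height is available, the canonical maps $\Phi_n$ have uniformly bounded distortion, which is exactly the equi-Lipschitz embeddability of $(T_N)_{N\ge1}$.
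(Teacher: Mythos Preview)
The paper does not contain a proof of this theorem: it is quoted in the introduction as a result of Baudier, Kalton and Lancien \cite{BKL}, and no argument for it appears anywhere in the present text. There is therefore no ``paper's own proof'' to compare your attempt against.

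As to your sketch itself, the overall architecture --- read off from $s_{\varepsilon_0}^n\neq\emptyset$ a depth-$n$ tree of weak$^*$-convergent $\varepsilon_0$-separated functionals, produce from it a tree of vectors in $X$, and feed this into the summing map $\Phi$ --- is indeed the shape of the argument in \cite{BKL}. Two steps, however, are not justified. In Case~1 you claim that a Bessaga--Pe\l czy\'nski gliding-hump turns the weak$^*$-null functional tree into a \emph{weakly null} tree in $X$ with an $\ell_1$ lower estimate along geodesics; but weak$^*$-nullity of the difference functionals says nothing about weak nullity of their norming vectors (your own parenthetical about $\ell_1$ already shows this fails in general), and Bessaga--Pe\l czy\'nski only yields a basic sequence, not an $\ell_1$ lower bound. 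The mechanism that actually works is near-biorthogonality between the norming vectors $x_v\in B_X$ and the differences $y^*_v=x^*_v-x^*_{v^-}$: after a subtree extraction making the off-diagonal pairings $y^*_v(x_u)$ small, one evaluates the single functional $x^*_s-x^*_t$ on $\Phi(s)-\Phi(t)$ and reads off a lower bound $\gtrsim\varepsilon_0\,d(s,t)$ directly, with no weak nullity of the $x_v$ and no abstract $\ell_1$ estimate required. In Case~2 your appeal to Goldstine/local reflexivity to pull the $X^{**}$-tree into $X$ is the right idea, but the sentence ``the analogue of $\Phi$ \dots\ equivalently the relativization of the standard equi-Lipschitz embedding of $(T_N)$ into $c_0$'' conceals that the embedding map and the lower-bound witness in this case are genuinely different from those in Case~1; your text does not specify either, so as written this is a plan rather than a proof.
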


\begin{thm}

Let $X$ be a reflexive separable Banach space. If $S_Z(X)\leq \omega$ and $S_Z(X^*)\leq \omega$, then the family $(T_N)_{N\geq 1}$ does not equi-Lipschitz embed into $X$.

\end{thm}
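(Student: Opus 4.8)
The plan is to argue by contradiction and to feed \emph{both} Szlenk hypotheses into the argument through the asymptotic moduli of $X$. First I would record the standard renorming reductions. Since $S_Z(X)\leq\omega$, the space $X$ carries an equivalent asymptotically uniformly smooth (AUS) norm, i.e.\ one whose modulus $\overline{\rho}_X$ satisfies $\overline{\rho}_X(t)/t\to 0$ (and, if a rate is wanted, of power type $p>1$ when the Szlenk index is power type, to which one can reduce). Since $X$ is reflexive and $S_Z(X^*)\leq\omega$, the dual $X^*$ is likewise AUS-renormable, so $X$ is asymptotically uniformly convex (AUC), i.e.\ $\overline{\delta}_X(t)>0$ for every $t>0$; and for reflexive $X$ these two renormings can be combined into a single norm that is simultaneously AUS and AUC. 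So we may assume $X$ has such a norm. Reflexivity also enters a second, crucial way: it provides weak compactness of bounded sets, which is what will let us take limits along the branching of the trees.

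Next, suppose towards a contradiction that $(T_N)_{N\geq 1}$ equi-Lipschitz embeds into $X$ with distortion $D$: for each $N$ there is $f\colon T_N\to X$ with $\tfrac1D d(s,t)\leq\|f(s)-f(t)\|\leq d(s,t)$. Fix $N$ very large. Using reflexivity together with a Ramsey-type pruning — the point being that keeping an infinite set of children at every node of $T_N$ reproduces an isometric copy of $T_N$ — I would extract a full ``weakly convergent'' subtree: one on which, for every non-leaf vertex $t$, the fan $\bigl(f(t^\frown n)\bigr)_n$ (where $t^\frown n$ denotes the $n$-th child of $t$) converges weakly to some point $g(t)$, and on which any branch can be further thinned so that the increments $f(t_k)-g(t_{k-1})$ are \emph{asymptotically successive}, i.e.\ each one lies arbitrarily far out in a finite-codimensional subspace relative to the finitely many vectors selected before it. Two local estimates then drive the contradiction. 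On one hand, asymptotic uniform convexity applied to the fan at a vertex $t$ — a weakly convergent, $2/D$-separated sequence whose terms lie within distance $1$ of $f(t)$ — forces $\|g(t)-f(t)\|\leq 1-c'$ for some $c'=c'(D,\overline{\delta}_X)>0$, and more generally yields a lower, $\ell_q$-type, spreading estimate for the limit vectors produced along a branch. On the other hand, asymptotic uniform smoothness yields an upper, $\ell_p$-type, estimate: an asymptotically successive sequence of vectors of norm $\leq 1$ has partial sums of norm $o(n)$ (of order $n^{1/p}$ in the power-type case), hence sublinear.

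Finally I would combine the two estimates along a single branch $t_0\prec t_1\prec\cdots\prec t_N$. Telescoping and splitting each step as $f(t_k)-f(t_{k-1})=\bigl(g(t_{k-1})-f(t_{k-1})\bigr)+\bigl(f(t_k)-g(t_{k-1})\bigr)$ writes $f(t_N)-f(t_0)$ as a sum of ``horizontal'' increments $f(t_k)-g(t_{k-1})$, which are asymptotically successive and of norm $\leq 2$, so their total has norm $o(N)$ by AUS, plus a sum of ``vertical'' increments $g(t_{k-1})-f(t_{k-1})$, which the AUC spreading estimate also forces to have norm $o(N)$; bi-Lipschitzness gives $\|f(t_N)-f(t_0)\|\geq N/D$, so for $N$ large the inequality $N/D\leq o(N)$ is absurd. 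The main obstacle — and exactly the place where one truly needs \emph{both} Szlenk indices to equal $\omega$, not merely reflexivity together with one of them — is the control of the vertical sum. The vectors $g(t)-f(t)$ are \emph{not} weakly null, so asymptotic uniform smoothness says nothing about their partial sums; one must use the asymptotic uniform convexity coming from $S_Z(X^*)\leq\omega$ in an essential and somewhat delicate way (for instance, Tsirelson's space $T^*$ is reflexive and AUS yet does admit equi-Lipschitz embeddings of the family $(T_N)$ by Theorem 3, so reflexivity plus $S_Z(X)\leq\omega$ cannot suffice), together with a careful multi-level extraction, to show that these displacements cannot all pull in roughly the same direction. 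I would also note that the whole scheme can be run with weak$^*$-compact subsets of $B_{X^*}$ and the Szlenk derivation in place of the renormed moduli — the form that makes transparent the link with the spreading-model statements collected later in the paper.
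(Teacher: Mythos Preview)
Your overall framework---contradiction, renorming to AUS and (weak$^*$-)AUC, passing to weak limits along the branching, and seeking incompatible upper and lower estimates---is the right one and matches the paper's strategy. The gap is exactly where you flag it: the control of the ``vertical'' sum $\sum_{k=1}^{N}\bigl(g(t_{k-1})-f(t_{k-1})\bigr)$. From AUC you correctly extract a pointwise bound $\|g(t)-f(t)\|\le 1-c'$, but that only yields $\bigl\|\sum v_k\bigr\|\le (1-c')N$, which is still linear in $N$ and does not contradict $\|f(t_N)-f(t_0)\|\ge N/D$. You assert that an ``$\ell_q$-type spreading estimate'' from AUC forces $\|\sum v_k\|=o(N)$, but the AUC/$\ell_q$ estimate is a \emph{lower} bound on norms of sums of weakly null blocks, not an upper bound; and the $v_k$ are neither weakly null nor asymptotically successive along a branch, so neither modulus says anything directly about $\sum v_k$. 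Your closing remark that one needs ``a careful multi-level extraction'' is precisely the point, but it is the whole difficulty, not a detail to be filled in later.

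The paper resolves this by abandoning the single-level splitting $f(t_k)-f(t_{k-1})=v_k+h_k$ altogether. Instead it takes \emph{iterated} weak($^*$) limits $\partial_j z(t)$ and forms the telescoped quantities $z_{k,j}(t)=\partial_{k-j}z(t_{|j})-\partial_{k-j+1}z(t_{|j-1})$, which are genuinely weak($^*$)-null in their last coordinate at every level. It then sets $N=3Q^M$, groups the $z_{k,j}$ into multi-scale blocks $w_{k,r}$ along exponentially spaced indices $\alpha_{k,r}=k-Q^r$, and proves two tree estimates (Propositions~\ref{estimates_below} and~\ref{estimates_above}) that play the roles of your AUC and AUS bounds. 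The AUC$^*$ side gives, via H\"older, $\sum_{k}\sum_{r}\|w_{k,r}(t)\|\le C(M+1)^{1/q^*}N$; the AUS side, combined with the bi-Lipschitz lower bound, gives $\sum_{k}\sum_{r}\|w_{k,r}(t)\|\ge cMQ^{M}$. The contradiction comes from comparing powers of $M$, not from an $o(N)$ versus $N/D$ dichotomy along a single branch. So the missing ingredient in your sketch is not a refinement of the one-step limit $g$ but an entirely different, hierarchically indexed object; without it the vertical terms cannot be made sublinear.
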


Note that the assumption of separability can be removed in both theorems by using properties of the Szlenk index. Using an argument à la Kloeckner and the property $(\beta)$ of Rolewicz, F. Baudier and S. Zhang gave in \cite{Baudier_Zang} a shorter proof of the second theorem. However, this argument cannot be extended to a more general setting since $(\beta)$ implies reflexivity.

In \cite{Kalton}, N. J. Kalton applied results coming from the study of Orlicz sequence spaces to get estimates on the spreading models of Banach spaces which coarse-Lipschitz embed into asymptotically uniformly convex spaces. Inspired by this method, we will give in section $4$ estimates on certain trees in the bi-dual space of quasi-reflexive Banach spaces enjoying asymptotic properties. These estimates will be key tools to extend the proof from \cite{BKL} of the non-embeddability theorem to the quasi-reflexive setting. This is done in section $3$. 

Using this result, we answer briefly in section $5$ the question of the embeddability of the family of countably branching trees into James spaces. Namely, we have the following.

\begin{thm}

Let $p\in(1,\infty)$. The family $(T_N)_{N\geq 1}$ does not equi-Lipschitz embed in the James space $\mathcal{J}_p$ and it does not equi-Lipschitz embed in its dual $\mathcal{J}_p^*$.

\end{thm}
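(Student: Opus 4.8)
The plan is to deduce this from the main theorem (Theorem 1) by verifying that the James spaces $\mathcal{J}_p$ and their duals $\mathcal{J}_p^*$ fall within its scope, i.e. they are quasi-reflexive and both have Szlenk index at most $\omega$.

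First I would recall the structural facts about $\mathcal{J}_p$. The James space $\mathcal{J}_p$ is the classical example of a quasi-reflexive space: it has codimension one in its bidual, so $\mathcal{J}_p^{**}/\mathcal{J}_p$ is one-dimensional. Since quasi-reflexivity is obviously inherited by the dual space (if $X^{**}/X$ is finite-dimensional then so is $X^{***}/X^*$, via the decomposition $X^{***} = X^* \oplus (X^{**}/X)^*$), it follows that $\mathcal{J}_p^*$ is quasi-reflexive as well. So the quasi-reflexivity hypothesis of Theorem 1 is satisfied by both $\mathcal{J}_p$ and $\mathcal{J}_p^*$.

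Next I would address the Szlenk index conditions. The key point is that $\mathcal{J}_p$ admits an equivalent norm that is asymptotically uniformly smooth with modulus of power type $p$; equivalently, the natural basis of $\mathcal{J}_p$ is an asymptotic $\ell_p$ (or at least asymptotically uniformly smooth) structure. By the standard characterization (due to Knaust–Odell–Schlumprecht and Godefroy–Kalton–Lancien), a separable Banach space has $S_Z(X) \leq \omega$ if and only if it admits an equivalent asymptotically uniformly smooth norm; hence $S_Z(\mathcal{J}_p) \leq \omega$. For the dual, one uses the fact that $\mathcal{J}_p^{**}$ is, up to the one extra dimension, an asymptotic $\ell_p$ space built on a shrinking basis, so $\mathcal{J}_p^{**}$ (and therefore $\mathcal{J}_p$, a subspace of it) is asymptotically uniformly smooth, while the predual structure makes $\mathcal{J}_p^*$ asymptotically uniformly convex with modulus of power type $q = p/(p-1)$; in particular $\mathcal{J}_p^*$ is asymptotically uniformly smoothable too — more directly, since $\mathcal{J}_p^{**}$ is quasi-reflexive with $S_Z(\mathcal{J}_p^{**}) \le \omega$, one gets $S_Z(\mathcal{J}_p^*) = S_Z(\mathcal{J}_p^{***}) \le \omega$ by applying the $X \leftrightarrow X^*$ symmetry of the hypotheses in Theorem 1. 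Thus both $\mathcal{J}_p$ and $\mathcal{J}_p^*$ satisfy $S_Z(\cdot) \le \omega$ and $S_Z((\cdot)^*) \le \omega$, and Theorem 1 applies to each of them, giving the non-embeddability of $(T_N)_{N \ge 1}$.

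The main obstacle is not any deep new argument but rather pinning down the Szlenk-index computations for the James space and its successive duals cleanly: one must be careful that the asymptotic smoothness/convexity of $\mathcal{J}_p$ survives under the bidual (where the extra one-dimensional summand appears) and correctly track which of $\mathcal{J}_p$, $\mathcal{J}_p^*$, $\mathcal{J}_p^{**}$, $\mathcal{J}_p^{***}$ plays the role of "$X$" in Theorem 1. I expect to handle this by citing the known values $S_Z(\mathcal{J}_p) = S_Z(\mathcal{J}_p^*) = \omega$ from the literature on James spaces and asymptotic structure, then simply invoking Theorem 1 twice.
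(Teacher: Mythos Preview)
Your approach is essentially identical to the paper's: the paper simply records (citing the literature) that $\mathcal{J}_p$ is quasi-reflexive and that each of $\mathcal{J}_p$, $\mathcal{J}_p^*$, $\mathcal{J}_p^{**}$ admits an equivalent AUS norm, hence has Szlenk index at most $\omega$, and then applies the main theorem once to $\mathcal{J}_p$ and once to $\mathcal{J}_p^*$. Your middle paragraph wanders a little (the line $S_Z(\mathcal{J}_p^*)=S_Z(\mathcal{J}_p^{***})$ ``by the $X\leftrightarrow X^*$ symmetry of Theorem~1'' is not a valid deduction), but your final plan --- cite the known Szlenk/AUS facts for $\mathcal{J}_p$, $\mathcal{J}_p^*$, $\mathcal{J}_p^{**}$ and invoke Theorem~1 twice --- is exactly what the paper does.
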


\section{Definitions, notation and preliminary results}

Let us first recall some definitions. Let $(M,d)$ and $(N,\delta)$ be two metric spaces. We say that $M$  Lipschitz embeds into $N$ if there exist a map $f:M\to N$ and constants $a,b>0$ such that $$\forall x,y\in M, \ ad(x,y)\leq \delta(f(x),f(y))\leq bd(x,y).$$ Furthermore let $(M_i,d_i)_{i\in I}$ be a family of metric spaces. We say that $(M_i)_{i\in I}$ equi-Lipschitz embeds into $N$ if there is a family of maps $(f_i:M_i\to N)_{i\in I}$ and constants $a,b>0$ such that $$\forall i\in I,\ \forall x,y\in M_i, \ ad_i(x,y)\leq \delta(f_i(x),f_i(y))\leq bd_i(x,y).$$

A Banach space $X$ is quasi-reflexive if the quotient $X^{**}/X$ is of finite dimension or equivalently if there is a finite dimensional space $E$ such that $X^{**}=X\oplus E$.

For every $N\geq 1$ let $T_N=\{\emptyset\}\cup\bigcup_{n=1}^N\N^n$.  If $s=(s_1,\dots, s_n)\in T_N$ for some $1\leq n\leq N$, we denote $s_{\lvert k}=(s_1,\dots s_k)$ for every $1\leq k\leq n$ and $s_{\lvert 0}=\emptyset$. Also, we write $s^-=s_{\lvert n-1}$ the predecessor of $s$. There is a natural graph structure on $T_N$ obtained by putting edges between each non-empty sequence $s$ and its predecessor $s^-$. Equipped with the induced hyperbolic or graph distance $d$, $T_N$ is called countably branching tree with $N$ steps. If $s,t\in T_N$, we write $s\leq t$ whenever the sequence $t$ is an extension of the sequence $s$. This defines an ordering  on $T_N$ and allows us to introduce the greatest common ancestor of $s$ and $t$ denoted $a_{s,t}$. For every $s\in T_N$, let $\abs{s}$ be the length of the sequence $s$. The distance $d$ on $T_N$ is also defined by the formula $$d(s,t)=d(a_{s,t},s)+d(a_{s,t},t)=\abs{s}+\abs{t}-2\abs{a_{s,t}}.$$ Finally, if $s=(s_1,\dots s_n)$ and $t=(t_1,\dots t_m)$, let $s\smallfrown t=(s_1,\dots s_n,t_1,\dots, t_m)$ and let $s\propto t=(s_1,t_1,\dots, s_n,t_n)$ if $m=n$. We say that a subset $T$ of $T_N$ is a full subtree of $T_N$ if $\emptyset\in T$ and if every sequence $t$ in $T$ of length at most $N-1$ has an infinite number of direct successors in $T$. That is to say $T\cap \edtq{t\smallfrown n}{n\in \N}$ is infinite.

For every infinite subset $\M$ of $\N$ and for every $k\geq 1$, let $$[\M]^k=\edtq{(n_1,\dots, n_k)}{n_1<\dots<n_k, \ n_i\in \M}.$$ Let us mention two versions of Ramsey's theorem we will need in the sequel (see for example \cite{Gowers}). 

\begin{thm}

Let $k\geq 1$ and $A\subset [\N]^k$. Assume that for every infinite subset $\M$ of $\N$, the set $[\M]^k$ has a non-empty intersection with $A$. Then we can find a infinite subset $\M$ of $\N$ such that $[\M]^k\subset A$.

\end{thm}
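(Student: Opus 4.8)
The statement is a standard reformulation of the infinite Ramsey theorem, and the plan is simply to make this reformulation explicit. First I would associate to the set $A$ the $2$-colouring $c$ of $[\N]^k$ defined by $c(s)=1$ if $s\in A$ and $c(s)=0$ otherwise. The infinite Ramsey theorem (which is the form of Ramsey's theorem for $k$-tuples one finds stated, e.g., in \cite{Gowers}) then provides an infinite subset $\M$ of $\N$ on which $c$ is constant, that is, such that $[\M]^k$ is monochromatic. If the constant value were $0$, then $[\M]^k$ would be contained in the complement of $A$ and hence would have empty intersection with $A$, contradicting the hypothesis applied to this particular $\M$. Therefore the constant value is $1$, which is exactly the assertion that $[\M]^k\subset A$.

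For completeness I would recall how the infinite Ramsey theorem itself is proved, by induction on $k$. The case $k=1$ is the pigeonhole principle: one of the two colour classes of a partition of the infinite set $\N$ must be infinite, and it serves as $\M$. For the induction step, assuming the result for $k$-tuples, one builds a decreasing sequence of infinite sets $\N=\M_0\supset\M_1\supset\M_2\supset\cdots$ together with integers $n_i=\min\M_{i-1}$: having chosen $\M_{i-1}$, apply the induction hypothesis to the colouring $s\mapsto c(\{n_i\}\cup s)$ of $[\M_{i-1}\setminus\{n_i\}]^{k}$ to obtain an infinite $\M_i\subset\M_{i-1}\setminus\{n_i\}$ on which this colouring takes a constant value $d_i$. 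A further use of the pigeonhole principle applied to the sequence $(d_i)_i$ yields an infinite set of indices $I$ along which $d_i=d$ is constant, and one checks that $\M=\{n_i:i\in I\}$ is monochromatic of colour $d$ for the original colouring of $[\N]^{k+1}$, using that $n_j\in\M_i$ whenever $j>i$.

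As for the difficulty, there is essentially none beyond this standard machinery; the only point worth isolating is that the hypothesis ``$[\M]^k\cap A\neq\emptyset$ for every infinite $\M$'' is precisely what forbids the homogeneous set produced by Ramsey's theorem from landing entirely in the complement of $A$. In the applications of this paper the set $A$ will typically consist of the $k$-tuples of indices selected by some quantitative condition (controlling, say, the behaviour of the images of certain tree vertices under a given map), and the function of this theorem is to upgrade, at the cost of passing to an infinite subset of $\N$, a statement of the form ``one can always find a good tuple'' into ``every tuple is good'' — exactly the stabilisation needed when building embedded full subtrees.
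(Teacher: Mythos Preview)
Your argument is correct: the reduction to the two-colour infinite Ramsey theorem via the indicator colouring of $A$ is exactly the right observation, and your sketch of the inductive proof of Ramsey's theorem itself is the standard one. There is nothing to fault mathematically.

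As for comparison with the paper: there is no proof to compare against. The paper states this theorem (together with the compact-metric-space version that follows it) as a preliminary fact and simply refers the reader to \cite{Gowers}, without giving any argument. So you have supplied what the paper deliberately omits. Your closing remark about how the theorem is used---upgrading ``some tuple is good'' to ``every tuple is good'' when extracting full subtrees---is also accurate and matches the role the result plays later in the paper.
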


\begin{thm} 

Let $(K,\rho)$ be a compact metric space and let $k\geq 1$. For every map $f:[\N]^k\to K$ and for every $\varepsilon>0$, there is an infinite subset $\M$ of $\N$ such that $\diam\ f\left( [\M]^k\right) \leq \varepsilon$.

\end{thm}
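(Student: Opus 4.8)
The plan is to deduce this from the two-colouring version of Ramsey's theorem stated just above (the statement about $A\subset[\N]^k$), by turning the target $K$ into a finite set of colours using its compactness.

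First I would fix $\varepsilon>0$ and use the compactness of $(K,\rho)$ to choose finitely many points $x_1,\dots,x_m\in K$ with $K=\bigcup_{i=1}^m B(x_i,\varepsilon/2)$. Setting $K_i=B(x_i,\varepsilon/2)\setminus\bigcup_{j<i}B(x_j,\varepsilon/2)$ and discarding the empty ones gives a finite partition $K=\bigsqcup_{i=1}^m K_i$ into non-empty sets of diameter at most $\varepsilon$. Composing the index function of this partition with $f$ yields a \emph{colouring} $c:[\N]^k\to\{1,\dots,m\}$ determined by $c(n_1,\dots,n_k)=i$ when $f(n_1,\dots,n_k)\in K_i$, with the property that $\rho\bigl(f(n_1,\dots,n_k),f(n_1',\dots,n_k')\bigr)\leq\varepsilon$ whenever $c(n_1,\dots,n_k)=c(n_1',\dots,n_k')$. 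Hence it is enough to produce an infinite $\M\subset\N$ on which $c$ is constant.

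I would obtain such an $\M$ by induction on the number $m$ of colours, the case $m=1$ being trivial. From the two-colouring theorem applied to $A=c^{-1}(\{1\})$ one extracts, by contraposition, the usual dichotomy: either there is an infinite $\M$ with $[\M]^k\subset A$, in which case $c\equiv 1$ on $[\M]^k$ and we are done, or there is an infinite $\M$ with $[\M]^k\cap A=\emptyset$, in which case $c$ takes only the values $2,\dots,m$ on $[\M]^k$. In the second case, letting $\varphi:\N\to\M$ be the increasing enumeration of $\M$, the map $(n_1,\dots,n_k)\mapsto c(\varphi(n_1),\dots,\varphi(n_k))$ is an $(m-1)$-colouring of $[\N]^k$, to which the inductive hypothesis applies and produces an infinite $\M'\subset\N$ on which it is constant; then $\varphi(\M')\subset\M$ is infinite and $c$ is constant on $[\varphi(\M')]^k\subset[\M]^k$, closing the induction.

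Finally, applying this to the colouring $c$ of the first step gives an infinite $\M\subset\N$ on which $c$ is constant, so $f([\M]^k)$ is contained in a single $K_i$ and therefore $\diam f([\M]^k)\leq\varepsilon$. The only mildly delicate point is the bookkeeping in the inductive step — identifying $[\M]^k$ with $[\N]^k$ through the increasing enumeration and transporting a constant colour back and forth — but this is entirely routine, and no real obstacle arises beyond the two-colouring Ramsey theorem itself, which we are free to use.
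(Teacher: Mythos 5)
Your argument is correct. The paper does not actually supply a proof of this statement (both Ramsey-type theorems in Section~2 are simply recorded with a reference to \cite{Gowers}), so there is no internal proof to compare against; but the route you take — using total boundedness of the compact $K$ to replace $f$ by a finite colouring $c:[\N]^k\to\{1,\dots,m\}$ with classes of diameter at most $\varepsilon$, then extracting a monochromatic infinite set by induction on $m$ with the dichotomy form of the stated two-alternative Ramsey theorem as the engine — is the standard deduction and every step checks out. In particular, the contrapositive reading of the first theorem does yield the needed dichotomy (for any $A\subset[\N]^k$ there is an infinite $\M$ with either $[\M]^k\subset A$ or $[\M]^k\cap A=\emptyset$), and the relabelling via the increasing enumeration $\varphi$ in the inductive step is the usual harmless bookkeeping. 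One could shorten slightly by invoking the finite-colouring Ramsey theorem directly, but deriving it from the two-set version as you do keeps the argument self-contained relative to the tools the paper lists.
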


Let $X$ be a Banach space, let $K$ be a weak$^*$-compact subset of the dual space $X^*$ and fix $\varepsilon>0$.  Denote by $\mathcal{V}$ the set of all weak$^*$-open subsets $V$ of $K$ satisfying $\diam\ V \leq \varepsilon$ and let $s_\varepsilon(K)=K\backslash\left(\bigcup_{V\in \mathcal{V}}V\right)$. We define inductively subsets $s_\varepsilon^\alpha(K)$ of  $K$ for every ordinal $\alpha$ by $s_\varepsilon^1(K)=s_\varepsilon(K)$, $s_\varepsilon^{\alpha+1}(K)=s_\varepsilon(s_\varepsilon^{\alpha}(K))$ if $\alpha\geq 1$ and $s_\varepsilon^{\alpha}=\bigcap_{\beta<\alpha}s_\varepsilon^{\beta}$ if $\alpha$ is a limit ordinal. Then let $S_Z(K,\varepsilon)=\inf\edtq{\alpha}{s_\varepsilon^{\alpha}(K)=\emptyset}$ if such an $\alpha$ exists and let $S_Z(K,\varepsilon)=\infty$ otherwise.  Finally let $S_Z(K)=\sup_{\varepsilon>0} S_Z(K,\varepsilon)$. The Szlenk index of $X$ is $S_Z(X)=S_Z\left(B_{X^*}\right)$ where $B_{X^*}$ denotes the closed unit ball of $X^*$. It was introduced in a different form by W. Szlenk in \cite{Szlenk} in order to prove that there is no separable reflexive universal Banach space for the class of separable reflexive Banach spaces. An extensive study of the properties  and applications of the Slzenk index can be found in \cite{Lancien_survey_Szlenk}.

Our main tools are asymptotic uniform properties of norms. Following V. Milman \cite{Milman}, we introduce two moduli: for all $t\geq 0$, let $$\modlissunif{X}{t}=\sup_{x\in S_X} \inf_{Y}\sup_{y\in S_Y}(\norme{x+ty}-1)$$ where $Y$ runs through all closed linear subspaces of $X$ of finite co-dimension (and $S_X$ denotes the unit sphere of $X$) and $$\modconvunifw{X}{t}=\inf_{x\in S_{X^*}}\sup_{E}\inf_{\underset{\norme{y^*}=1}{y^*\in E}} (\norme{x^*+ty^*}-1)$$ where $E$ runs through all weak$^*$-closed subspaces of $X^*$ of finite co-dimension. We say that $\norme{.}_X$ is asymptotically uniformly smooth (in short AUS) if $\lim_{t\to 0}\frac{\modlissunif{X}{t}}{t}=0$ and we say that $\norme{.}_{X^*}$ is weak$^*$ asymptotically uniformly convex (in short AUC$^*$) if $\modconvunifw{X}{t}>0$ for all $t>0$. If $p\in(1,\infty)$, we say that $\norme{.}_X$ is $p$-AUS (respectively $\norme{.}_{X^*}$ is $p$-AUC$^*$) if there is a constant $c>0$ such that for all $t\in[0,1]$, $\modlissunif{X}{t}\leq ct^p$ (respectively $\modconvunifw{X}{t}\geq ct^p$). There is a nice duality result concerning these moduli (see for example \cite{DKLR}).

\begin{thm}

Let $X$ be a Banach space and let $p,q\in(1,\infty)$ be conjugate exponents. Then $\norme{.}_X$ is $p$-AUS if and only if $\norme{.}_{X^*}$ is $q$-AUC$^*$.

\end{thm}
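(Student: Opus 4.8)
The statement is the asymptotic analogue of Lindenstrauss's classical duality between the moduli of uniform convexity and smoothness, and the plan is to prove it in the same spirit: through a Legendre--Fenchel-type duality formula relating the two moduli $\modlissunif{X}{\cdot}$ and $\modconvunifw{X}{\cdot}$, after which the passage between the power-type estimates $\modlissunif{X}{t}\leq ct^{p}$ and $\modconvunifw{X}{t}\geq c't^{q}$ is an elementary computation with Young's inequality. As a preliminary I would record the basic properties of the modulus of smoothness: $\modlissunif{X}{\cdot}$ is non-decreasing, vanishes at $0$, is convex (the only non-immediate point, obtained by averaging in the defining $\sup/\inf/\sup$), hence continuous on $(0,\infty)$; and one has the crude lower bound $\modconvunifw{X}{\sigma}\geq \sigma-2$ from the triangle inequality.

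The core of the proof is the duality formula
\begin{equation*}
\modconvunifw{X}{\sigma}=\sup_{t>0}\bigl(\sigma t-\modlissunif{X}{t}\bigr),\qquad \sigma>0,
\end{equation*}
that is, $\modconvunifw{X}{\cdot}=(\modlissunif{X}{\cdot})^{*}$ is the Legendre--Fenchel conjugate of $\modlissunif{X}{\cdot}$; by convexity of $\modlissunif{X}{\cdot}$, biconjugation then also gives $\modlissunif{X}{\cdot}=(\modconvunifw{X}{\cdot})^{*}$. I would prove the inequality $\modconvunifw{X}{\sigma}\geq\sigma t-\modlissunif{X}{t}$ by a Hahn--Banach argument. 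Fix $x^{*}\in S_{X^{*}}$ and $\varepsilon>0$, choose $x\in S_{X}$ with $x^{*}(x)>1-\varepsilon$, and use the definition of $\modlissunif{X}{t}$ to find a finite-codimensional subspace $Y\subseteq\ker x^{*}$ (adjoining $x^{*}$ to the finite family of functionals cutting out $Y$ costs nothing) with $\norme{x+ty}\leq 1+\modlissunif{X}{t}+\varepsilon$ for all $y\in S_{Y}$. Writing $Y=F_{\perp}$ with $F\subseteq X^{*}$ finite-dimensional, I would invoke a near-orthogonality lemma: there is a weak$^{*}$-closed finite-codimensional subspace $E$ contained in the hyperplane $\{y^{*}:y^{*}(x)=0\}$ such that $\operatorname{dist}(y^{*},F)\geq 1-\varepsilon$ for every $y^{*}\in S_{E}$, so each such $y^{*}$ almost attains its norm on $S_{Y}$. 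Pairing $x^{*}+\sigma y^{*}$ with a suitable $x+ty$, $y\in S_{Y}$, $y^{*}(y)>1-\varepsilon$, annihilates both cross terms ($x^{*}(y)=0$ and $y^{*}(x)=0$) and yields $\norme{x^{*}+\sigma y^{*}}\geq\bigl((1-\varepsilon)(1+\sigma t)-O(\varepsilon)\bigr)/(1+\modlissunif{X}{t}+\varepsilon)$; letting $\varepsilon\to 0$ and taking the supremum over $t$ gives the claimed inequality.

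The reverse inequality $\modconvunifw{X}{\sigma}\leq\sup_{t>0}(\sigma t-\modlissunif{X}{t})$ is the point I expect to be the main obstacle. It amounts to the dual Hahn--Banach statement: for $x\in S_{X}$ and $t>0$ one must produce a finite-codimensional $Y$ on which $\norme{x+ty}$ does not exceed $1+\sup_{\sigma>0}(\sigma t-\modconvunifw{X}{\sigma})+\varepsilon$, which forces one to decompose the functionals realizing $\norme{x+ty}$ into a component near $x$ and an ``asymptotic'' component and to control the latter via the definition of $\modconvunifw{X}{\cdot}$. Making this rigorous needs a compactness argument (Banach--Alaoglu together with a diagonal/limiting extraction of a genuine norming functional and a genuine bad direction) and uses the crude lower bound $\modconvunifw{X}{\sigma}\geq\sigma-2$ to guarantee that the supremum over $\sigma$ is finite and effectively attained. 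This is exactly the content of the duality result quoted from \cite{DKLR}, so here I would either reproduce that argument or simply reduce to it.

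Granting the duality formula, the theorem follows by Young's inequality. If $\norme{\cdot}_{X}$ is $p$-AUS, say $\modlissunif{X}{t}\leq ct^{p}$ for $t\in[0,1]$, then for $\sigma$ small enough that $t_{\sigma}:=(\sigma/cp)^{1/(p-1)}\leq 1$,
\begin{equation*}
\modconvunifw{X}{\sigma}\geq\sup_{0<t\leq 1}\bigl(\sigma t-ct^{p}\bigr)=\sigma t_{\sigma}-ct_{\sigma}^{p}=c'\sigma^{q},
\end{equation*}
so $\modconvunifw{X}{\sigma}\geq c'\sigma^{q}$ near $0$ and hence, by monotonicity and after shrinking $c'$, on all of $[0,1]$; thus $\norme{\cdot}_{X^{*}}$ is $q$-AUC$^{*}$. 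Conversely, if $\modconvunifw{X}{\sigma}\geq c\sigma^{q}$ for $\sigma\in[0,1]$, I use $\modlissunif{X}{t}=(\modconvunifw{X}{\cdot})^{*}(t)=\sup_{\sigma>0}(\sigma t-\modconvunifw{X}{\sigma})$: for $t<1$ the lower bounds $\modconvunifw{X}{\sigma}\geq c\sigma^{q}$ on $[0,1]$ and $\modconvunifw{X}{\sigma}\geq\sigma-2$ on $[1,\infty)$ (the latter making $\sigma t-\modconvunifw{X}{\sigma}\to-\infty$) show that for $t$ small the supremum is attained at $\sigma_{t}=(t/cq)^{1/(q-1)}\leq 1$, giving $\modlissunif{X}{t}\leq Ct^{p}$, so $\norme{\cdot}_{X}$ is $p$-AUS.
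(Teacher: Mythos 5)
The paper itself does not prove this theorem: it simply cites \cite{DKLR}, so there is no in-paper proof to compare against. Your overall plan — relate $\overline{\rho}_X$ and $\overline{\delta}^*_X$ by a conjugate-function type inequality and then convert power-type estimates with Young's inequality — is indeed the standard route, and the Young's-inequality step at the end is fine. However, two of your intermediate claims are false as stated, and one of them is needed in the direction you actually use.

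First, the exact Legendre--Fenchel identity $\overline{\delta}^*_X(\sigma)=\sup_{t>0}\bigl(\sigma t-\overline{\rho}_X(t)\bigr)$ does not hold, and in the direction that matters: for $X=\ell_2$ one computes $\overline{\rho}_{\ell_2}(t)=\sqrt{1+t^2}-1=\overline{\delta}^*_{\ell_2}(t)$, while $(\overline{\rho}_{\ell_2})^*(\sigma)=1-\sqrt{1-\sigma^2}$ for $\sigma\le 1$, and $\sqrt{1+\sigma^2}-1<1-\sqrt{1-\sigma^2}$ for all $\sigma\in(0,1]$. So $\overline{\delta}^*_X<(\overline{\rho}_X)^*$ strictly, which means the inequality $\overline{\delta}^*_X(\sigma)\ge\sigma t-\overline{\rho}_X(t)$ that your Hahn--Banach argument is supposed to deliver is simply false. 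Second, the near-orthogonality lemma you invoke is also false: you cannot arrange a weak$^*$-closed finite-codimensional $E$ with $\operatorname{dist}(y^*,F)\ge 1-\varepsilon$ for every $y^*\in S_E$. The two estimates $\norme{y^*-f}\ge\abs{f(x_i)}\approx\norme{f}$ (from $y^*(x_i)=0$) and $\norme{y^*-f}\ge 1-\norme{f}$ only combine to give $\operatorname{dist}(y^*,F)\ge \tfrac12-\varepsilon$, and $\tfrac12$ is attained: in $X=\ell_1$, $X^*=\ell_\infty$, with $x^*=(1,1,\dots)$ and $F=\operatorname{span}(x^*)$, any $y^*\in S_E$ supported far out with entries in $\{0,1\}$ has $\operatorname{dist}(y^*,F)=\tfrac12$. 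Consequently your pairing computation yields only something like $\norme{x^*+\sigma y^*}\ge\bigl(1+\tfrac12\sigma t\bigr)/\bigl(1+\overline{\rho}_X(t)\bigr)$, i.e.\ $\overline{\delta}^*_X(\sigma)\gtrsim\tfrac12\sigma t-\overline{\rho}_X(t)$ up to the denominator. That weaker, constant-adjusted inequality is true and is enough to run the Young's-inequality computation (the constants only change $c$, not the exponent), and this is essentially what \cite{DKLR} prove. So the repair is to abandon the exact conjugate identity, prove only the one-sided inequalities with explicit universal constants (roughly $\overline{\delta}^*_X(\sigma)\ge\tfrac12\sigma t-\overline{\rho}_X(t)-\varepsilon$ and its dual counterpart $\overline{\rho}_X(t)\le\sigma t-\overline{\delta}^*_X(\sigma)+\varepsilon$ for suitable $\sigma$), and then your Young's-inequality step goes through unchanged. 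Finally, note that your treatment of the other inequality is an explicit deferral to \cite{DKLR}, which is fine as a reduction but means the proposal does not actually provide an independent proof of the harder half.
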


In \cite{KOS}, the following renorming theorem was proved.

\begin{thm}\label{renorming}

Let $X$ be a separable space. The following assertions are equivalent.

\begin{enumerate}

\item{ The space $X$ admits an equivalent AUS norm. }

\item{ The space $X$ admits an equivalent $p$-AUS norm for some $p\in (1,\infty)$. }

\item{ The space $X$ satisfies $S_Z(X)\leq \omega$.}

\end{enumerate}

\end{thm}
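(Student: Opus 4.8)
The plan is to close the cycle $(2)\Rightarrow(1)\Rightarrow(3)\Rightarrow(2)$, the first two implications being routine and the substance lying in the last one (the Knaust--Odell--Schlumprecht renorming). The implication $(2)\Rightarrow(1)$ is immediate: a bound $\overline{\rho}_X(t)\le ct^{p}$ with $p>1$ forces $\overline{\rho}_X(t)/t\le ct^{p-1}\to 0$. For $(1)\Rightarrow(3)$ I would first note that $S_Z(X)\le\omega$ is insensitive to equivalent renormings: if $\|\cdot\|_1$ and $\|\cdot\|_2$ are comparable then a routine comparison of their dual balls shows that, for every scale $\delta$, the indices $S_Z(\cdot,\delta)$ computed with the two norms are finite together, so $S_Z(X)\le\omega$ holds for one iff it holds for the other. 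Thus one may assume $\|\cdot\|_X$ is AUS and run the standard computation showing that asymptotic smoothness makes the Szlenk derivations of $B_{X^*}$ contract geometrically, i.e.\ for each $\varepsilon>0$ there is $\theta_\varepsilon<1$ with $s_\varepsilon(B_{X^*})\subseteq\theta_\varepsilon B_{X^*}$: if $x^{*}\in s_\varepsilon(B_{X^*})$ with $\|x^{*}\|=\lambda$, there is a weak$^{*}$-convergent net $z^{*}_\alpha\to x^{*}$ in $B_{X^*}$ with $\|z^{*}_\alpha-x^{*}\|>\varepsilon/2$; writing $z^{*}_\alpha=x^{*}+w^{*}_\alpha$ with $w^{*}_\alpha$ weak$^{*}$-null, and using that a weak$^{*}$-null net almost lives in any prescribed finite-codimensional weak$^{*}$-closed subspace, the weak$^{*}$ asymptotic uniform convexity of $\|\cdot\|_{X^*}$ (the non power-type form of the duality recalled above) gives $\|z^{*}_\alpha\|\ge\lambda(1+\overline{\delta}^{*}_{X^*}(\varepsilon/2))-o(1)$, whence $\lambda(1+\overline{\delta}^{*}_{X^*}(\varepsilon/2))\le 1$. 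Since $s_\varepsilon$ commutes with dilations and is monotone in $\varepsilon$, iteration yields $s_\varepsilon^{\,n}(B_{X^*})\subseteq\theta_\varepsilon^{\,n}B_{X^*}$, and once $\theta_\varepsilon^{\,n}<\varepsilon/2$ the right-hand side has norm-diameter $<\varepsilon$, so $s_\varepsilon^{\,n+1}(B_{X^*})=\emptyset$; hence $S_Z(X,\varepsilon)<\omega$ for every $\varepsilon$, that is $S_Z(X)\le\omega$.

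The heart of the proof is $(3)\Rightarrow(2)$, and the strategy is to build an equivalent \emph{dual} norm on $X^{*}$ that is $q$-AUC$^{*}$ for some $q\in(1,\infty)$; the duality theorem recalled above then turns this into a $p$-AUS renorming of $X$, with $p$ conjugate to $q$. To produce that dual norm I would extract the quantitative content of the hypothesis: for every $n\ge 1$ the index $i_n:=S_Z(B_{X^*},2^{-n})$ is a finite integer, so the weak$^{*}$-compact convex set $K_n$ equal to the weak$^{*}$-closed convex hull of $s_{2^{-n}}^{\,\lfloor i_n/2\rfloor}(B_{X^*})$ has empty Szlenk derivation of order $\lfloor i_n/2\rfloor$ at scale $2^{-n}$ while still carrying a definite portion of $B_{X^*}$. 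One then builds a new dual unit ball $B^{\mathrm{new}}$ as a weak$^{*}$-closed convex body manufactured from the dilates $a_n K_n$ (the classical choice being an $\ell_q$-type combination of their Minkowski gauges) and chooses the weights $(a_n)$ so that: $B^{\mathrm{new}}$ stays equivalent to $B_{X^*}$; it is weak$^{*}$-closed, hence genuinely the dual ball of a norm on $X$; and the geometric-contraction mechanism of the previous paragraph, now imposed at \emph{all} scales simultaneously with a rate controlled by the $a_n$, sharpens into a \emph{power-type} estimate $s_\varepsilon(B^{\mathrm{new}})\subseteq(1-c\varepsilon^{q})B^{\mathrm{new}}$, i.e.\ the new dual norm is $q$-AUC$^{*}$. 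A final application of the duality theorem gives $(2)$.

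I expect the only genuine obstacle to be this last implication, and inside it the delicate point is the choice of weights: one must convert a merely \emph{finite} Szlenk index at each individual scale into an equivalent norm whose Szlenk modulus is \emph{polynomial uniformly} over all scales, which amounts to balancing the decay of the $a_n$ against the a priori arbitrary growth of the integers $i_n$ so that equivalence of norms survives while a single exponent $q$ emerges. This balancing is precisely the core of \cite{KOS}, and it is where separability enters. An essentially equivalent route replaces the interpolation step by the observation that $S_Z(X)\le\omega$ already forces a subsequential upper $\ell_p$-tree estimate on $X$, which is then upgraded to an equivalent $p$-AUS norm by an Odell--Schlumprecht-type averaging renorming; the combinatorial difficulty --- controlling every scale at once --- is the same.
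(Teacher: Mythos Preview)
The paper does not prove this theorem; it is quoted from \cite{KOS} as a known renorming result, so there is no proof in the paper to compare your proposal against.

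Your sketch is broadly correct and aligned with the original argument: $(2)\Rightarrow(1)$ is trivial; your $(1)\Rightarrow(3)$ via the contraction $s_\varepsilon(B_{X^*})\subseteq\theta_\varepsilon B_{X^*}$ (using the AUS/AUC$^*$ duality) is the standard route; and for $(3)\Rightarrow(2)$ you correctly identify the Knaust--Odell--Schlumprecht construction of an equivalent dual norm with power-type weak$^*$ convexity as the substantial step, together with the duality theorem to pass back to a $p$-AUS norm on $X$. One notational slip: in your $(1)\Rightarrow(3)$ paragraph you write $\overline{\delta}^{*}_{X^*}$, but in the paper's convention $\overline{\delta}^{*}_X$ already denotes the weak$^*$ modulus of $X^*$, so you mean $\overline{\delta}^{*}_X(\varepsilon/2)$.
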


Now, let us introduce some inequalities using the asymptotic uniform moduli we introduced above.  Let $X$ be a Banach space. First, we have a well known result concerning $\overline{\delta}^*_{X}$.

\begin{prp}

Fix $x^*\in S_{X^*},\ \sigma\geq 0$ and  $\varepsilon>0$. There is a weak$^*$-neighborhood $V$ of $0$ such that $$\forall y^*\in V,\ \norme{y^*}\geq \sigma\implies \norme{x^*+y^*}\geq 1+\modconvunifw{X}{\sigma}-\varepsilon.$$

\end{prp}

Using this, we deduce the following.

\begin{lm}\label{lm_w_conv}

Fix $x^*\in S_{X^*},\ R\geq 0$ and  $\varepsilon>0$. There is a weak$^*$-neighborhood $V$ of $0$ such that $$\forall y^*\in V\cap RB_{X^*}, \ \norme{x^*+y^*}\geq 1+\modconvunifw{X}{\norme{y^*}}-\varepsilon.$$

\end{lm}

\begin{proof}

Fix $\eta\in(0,1)$ and take a finite $\eta$-net $(\sigma_i)_{1\leq i\leq n}$ in $[0,R]$ containing $0$. Applying the preceding proposition we get a weak$^*$-neighborhood $V$ of $0$ such that $$\forall 1\leq i\leq n,\ \forall y^*\in V,\ \norme{y^*}\geq \sigma_i\implies \norme{x^*+y^*}\geq 1+\modconvunifw{X}{\sigma_i}-\eta.$$

Now take $y^*\in V$ with $\norme{y^*}\leq R$. We can find some $1\leq i_0\leq n$ such that $\sigma_{i_0}\leq \norme{y^*}\leq\sigma_{i_0}+\eta$. Applying the preceding inequality we get \begin{align*}
\norme{x^*+y^*} & \geq 1+\modconvunifw{X}{\norme{y^*}}+\modconvunifw{X}{\sigma_{i_0}}-\modconvunifw{X}{\norme{y^*}}-\eta \\
&\geq 1+\modconvunifw{X}{\norme{y^*}}-\modcont{\overline{\delta}^*_X}{\eta}-\eta
\end{align*} where $\modcont{\overline{\delta}^*_X}{.}$  is the modulus of continuity of the function $\overline{\delta}^*_X$. The result follows because $\overline{\delta}^*_X$ is uniformly continuous.

\end{proof}

Similar results exist for $\overline{\rho}_X$ (with weak-neighborhoods of $0$) but in fact, we can do a bit  better. The following improvement was given in \cite{LR}.

\begin{prp}

Fix $x\in S_X,\ \sigma\geq 0$ and $\varepsilon>0$. There is a weak$^*$-neighborhood $V$ of $0$ in $X^{**}$ such that $$\forall y^{**}\in V,\ \norme{y^{**}}\leq \sigma; \ \norme{x+y^{**}}\leq 1+\modlissunif{X}{\sigma}+\varepsilon.$$

\end{prp}

In the same way as before, we deduce the following.

\begin{lm}\label{lm_smooth}

Fix $x\in S_X,\ R\geq 0$ and  $\varepsilon>0$. There is a weak$^*$-neighborhood $V$ of $0$ in $X^{**}$ such that $$\forall y^{**}\in V\cap RB_{X^{**}}, \ \norme{x+y^{**}}\leq 1+\modlissunif{X}{\norme{y^{**}}}+\varepsilon.$$

\end{lm}

In particular, we will consider standard weak$^*$-neighborhoods of $0$ in $X^{**}$ of the form $$V_{x^*_1,\dots,x^*_m;\varepsilon}=\edtq{x^{**}\in X^{**}}{\forall 1\leq i\leq m,\ \abs{x^{**}(x^*_i)}<\varepsilon}$$ with $x^*_1,\dots,x^*_k\in X^*$ and $\varepsilon>0$

To finish this section, let us give some results coming from \cite{Kalton} using the theory of Orlicz sequence spaces. Let us assume that our space $X$ is infinite dimensional and that it does not contain $c_0$. Then it is known that the function $\overline{\rho}_X$ satisfies the condition $\overline{\rho}_X(t)>0$ for all $t>0$ (see for example \cite{JLPS} section 2 for more information and for references). In this case, the modulus $\overline{\rho}_X$ is an Orlicz function, which is to say it is a continuous non-decreasing and convex function satisfying $\overline{\rho}_X(0)=0$ and $\overline{\rho}_X(t)>0$ for every $t>0$. We define by induction functions $N_k^{\overline{\rho}_X}$ on $\R^k$ first by $N_2^{\overline{\rho}_X}(x,y)=\abs{x}\left(1+\overline{\rho}_X\left(\frac{\abs{y}}{\abs{x}}\right)\right)$ if $x\neq 0$ and $N_2^{\overline{\rho}_X}(0,y)=\abs{y}$ and then by $N_k^{\overline{\rho}_X}(x_1,\dots,x_k)=N_2^{\overline{\rho}_X}(N_{k-1}^{\overline{\rho}_X}(x_1,\dots,x_{k-1}),x_k)$ if $k\geq 3$. These functions define norms on $\R^k$. Now let us assume that the dual space $X^*$ is $AUC^*$. Then the function $\overline{\delta}^*_{X}$ satisfies the condition $\overline{\delta}^*_{X}(t)>0$ for all $t>0$. Since $\overline{\delta}^*_{X}$  need not be an Orlicz function, we introduce an auxiliary  function $\delta(t)=\int_0^t \frac{\modconvunifw{X}{s}}{s} ds$ which happens to be an Orlicz function and to satisfy $\frac{1}{2}\overline{\delta}^*_{X}\leq \delta\leq \overline{\delta}^*_{X}$. We define as before the norms $N_k^\delta$ on $\R^k$. These norms satisfy the following properties.

\begin{lm}\label{Orlicz_Inequalities}

Let $X$ be an infinite dimensional Banach space.

\begin{enumerate}

\item{ If $X$ is $p-AUS$ for some $p\in (1,\infty)$ and if it does not contain $c_0$, there is a constant $A>0$ such that: $$\forall k\geq 1,\ \forall v\in \R^k,\ N_k^{\overline{\rho}_X}(v)\leq A\norme{v}_{l_p^k}.$$ }

\item{ If $X^*$ is $q-AUC^*$ for some $q\in (1,\infty)$, there is a constant $a>0$ such that: $$\forall k\geq 1,\ \forall v\in \R^k,\ N_k^\delta(v)\geq a\norme{v}_{l_q^k}.$$ }

\end{enumerate}

\end{lm}

\section{Main result}

This section is devoted to the proof of the main result.

\begin{thm}\label{Main_result}

Let $X$ be a quasi-reflexive Banach space satisfying $S_Z(X)\leq\omega$ and $S_Z(X^*)\leq \omega$. Then the family $(T_N)_{N\geq 1}$ does not equi-Lipschitz embed into $X$.

\end{thm}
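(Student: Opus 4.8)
The plan is to follow the strategy of Baudier--Kalton--Lancien, but carried out in the bidual $X^{**}$, which by quasi-reflexivity decomposes as $X = X^{**} \oplus E$ with $E$ finite-dimensional. Suppose for contradiction that $(T_N)_{N\geq 1}$ equi-Lipschitz embeds into $X$, say with distortion constants $a, b > 0$. The first step is to renorm: by Theorem~\ref{renorming}, since $S_Z(X) \leq \omega$ the space $X$ admits an equivalent $p$-AUS norm for some $p \in (1,\infty)$, and by the duality Theorem this makes $X^*$ $q$-AUC$^*$ with $q$ conjugate to $p$; similarly $S_Z(X^*) \leq \omega$ gives (after a compatible renorming) that $X^*$ is $p'$-AUS and hence $X^{**}$ is $q'$-AUC$^*$ for appropriate exponents. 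The point of the Kalton-style estimates promised in Section~4 is to transfer the AUS/AUC$^*$ behaviour of $X$ up to trees living in $X^{**}$: along a branch of the tree, summing the increments is controlled above by an $\ell_p$-norm (via Lemma~\ref{Orlicz_Inequalities}(1) and the $N_k^{\overline{\rho}}$ functions), while the $\ell_1$-type spread transverse to the branch is controlled below by an $\ell_q$-norm (via Lemma~\ref{Orlicz_Inequalities}(2), using $N_k^{\delta}$ and Lemma~\ref{lm_w_conv}).

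The core combinatorial step is the following. Given the embedding $f_N : T_N \to X$, one first stabilises it using the compactness Ramsey theorem (Theorem for compact $(K,\rho)$): passing to a full subtree, one may assume that along any branch the successive difference vectors $f_N(s) - f_N(s^-)$ have norms that are nearly constant, and that these difference vectors, viewed in $X^{**}$, are ``nearly weak$^*$-null'' in the sense that each one lies in a prescribed small weak$^*$-neighbourhood $V_{x^*_1,\dots,x^*_m;\varepsilon}$ determined by the finitely many functionals used to read off earlier increments (this is where quasi-reflexivity enters: the finite-dimensional defect $E$ is handled by also stabilising the $E$-components, which, being in a finite-dimensional space, can be made genuinely convergent along the tree by a further Ramsey pass). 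Then one plays two vertices $s, t$ of $T_N$ with common ancestor $a_{s,t}$ against each other. On one hand, writing $f_N(s) - f_N(a_{s,t})$ as a telescoping sum of $\approx N$ nearly-null increments and applying the $p$-AUS estimate iteratively (Lemma~\ref{lm_smooth} applied in $X^{**}$), one gets an upper bound of order $N^{1/p}$ for $\|f_N(s) - f_N(a_{s,t})\|$, up to the Lipschitz constant $b$. On the other hand, combining the two half-branches and using the AUC$^*$ estimate (Lemma~\ref{lm_w_conv}) on the ``fork'' at $a_{s,t}$ forces a lower bound of order $N^{1/q}$ for $\|f_N(s) - f_N(t)\| \geq a\, d(s,t) = 2aN$... more precisely, one extracts that the image of a well-chosen configuration of $2N$ leaves is simultaneously $\ell_p$-like from above and $\ell_q$-like from below along orthogonal directions, and comparing these on a single family of $N$ vectors yields $aN \lesssim b\, N^{1/p}$, equivalently $N^{1/q} \lesssim b/a$, which is impossible for $N$ large since $q < \infty$.

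I expect the main obstacle to be the simultaneous stabilisation in the bidual: one needs, along a single full subtree, control of \emph{both} the AUS behaviour (increments along branches weak$^*$-small against the functionals witnessing smoothness of the partial sums) \emph{and} the AUC$^*$ behaviour at the branching points, \emph{and} the finite-dimensional $E$-components must be tamed so that the weak$^*$-topology arguments valid in $X^{**}$ actually say something about the original vectors in $X$. Organising the nested Ramsey extractions so that all of these hold at once on one infinitely-branching subtree, while keeping the number of functionals finite at each stage, is the delicate bookkeeping; this is precisely what the tree estimates of Section~4 are designed to package. Once those estimates are in hand, the final contradiction --- balancing $N^{1/p}$ against $N^{1/q}$ through the bi-Lipschitz inequalities $aN \leq \|f_N(s)-f_N(t)\| \leq 2bN$ combined with the asymptotic bounds --- is a short computation.
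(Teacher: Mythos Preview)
Your proposal contains a genuine gap at the heart of the contradiction. You claim that after stabilisation the increments $f_N(s)-f_N(s^-)$ along a branch are ``nearly weak$^*$-null'', and that iterating the $p$-AUS estimate then gives $\|f_N(s)-f_N(a_{s,t})\|\lesssim N^{1/p}$. But this is impossible: the lower bi-Lipschitz bound already forces $\|f_N(s)-f_N(\emptyset)\|\geq N$, so a sum of $N$ such increments has norm of order $N$, not $N^{1/p}$. The increments along a fixed branch simply are not weak-null in any useful sense, and no Ramsey pass can make them so. Consequently the inequality ``$aN\lesssim bN^{1/p}$'' you aim for is based on a false premise, and the argument as written does not close. (A minor slip: you wrote $X=X^{**}\oplus E$; it should be $X^{**}=X\oplus E$.)

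What the paper actually does is substantially more elaborate and is precisely designed to manufacture genuinely weak$^*$-null objects out of the increments. One first defines iterated weak$^*$-limits $\partial_j z(t)$ of $z(t)=f(t)-f(t^-)$ and then the \emph{differences} $z_{k,j}(t)=\partial_{k-j}z(t_{|j})-\partial_{k-j+1}z(t_{|j-1})$; it is these $z_{k,j}$, not the raw increments, that tend weak$^*$ to $0$ as the last coordinate varies. The contradiction is then obtained not by comparing $N^{1/p}$ to $N$, but via a multi-scale (self-improvement) argument: one takes $N=3Q^M$, groups the $z_{k,j}$ into block functions $w_{k,r}$ at geometrically separated scales $r=1,\dots,M_k$, and shows that the double sum $\sum_{k}\sum_{r}\|w_{k,r}(t)\|$ is simultaneously bounded below by $\tfrac{1}{2}MQ^M$ (using the upper $\ell_p$-estimate of Proposition~\ref{estimates_above} together with the bi-Lipschitz lower bound at branching points) and above by $(M+1)^{1/q^*}Q^{M+1}$ (using the lower $\ell_q$-estimate of Proposition~\ref{estimates_below} and H\"older). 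The contradiction is in the auxiliary parameter $M$, not in $N$. Your outline captures the right ingredients (Ramsey stabilisation, handling the finite-dimensional defect $E$, the Orlicz-type norms $N_k^{\overline{\rho}_X}$ and $N_k^\delta$), but it is missing this entire multi-scale layer, without which no contradiction arises.
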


In order to prove our result, let us consider a quasi-reflexive infinite dimensional Banach space $X$. We suppose that the family $(T_N)_{N\geq 1}$ equi-Lipschitz embeds into $X$. We may assume that there exist a constant $c>0$ and functions  $f_N:T_N\to X$ with $f_N(\emptyset)=0$ such that $$\forall N\geq 1,\  \forall s,t\in T_N,\ d(s,t)\leq \norme{f_N(s)-f_N(t)}\leq cd(s,t).$$ Considering the closed linear span of $\bigcup_{N\geq 1}f_N(T_N)$ in $X$, we may assume that $X$ and therefore all its iterated duals are separable. 

Now suppose that $X$ satisfies $S_Z(X)\leq\omega$ and $S_Z(X^*)\leq \omega$. By the renorming theorem \ref{renorming}, we may assume that $\norme{.}_X$ is $p-AUS$ and that the dual space $X^*$ admits an equivalent $q^*-AUS$ norm $\abs{.}$ for some $p,q^*\in(1,\infty)$. As mentioned we get that the dual norm $\abs{.}$ on $X^{**}$ is $q-AUC^*$ where $q$ is the conjugate exponent of $q^*$. We may assume that $$\abs{.}\leq\norme{.}\leq e\abs{.}$$ for some constant $e>0$ on $X^{**}$. 

Let us fix some $N\geq 1$ which is to be determined later and let us write $f=f_N$. We will be considering the function $f$ as a function with values in $X^{**}$. For all $t\in T_N,\ t\neq\emptyset,$ we put $$z(t)=f(t)-f(t^-).$$ Note that $\norme{z(t)}\leq c$ for every $t\in T_N$. Therefor, using weak$^*$-sequential compactness and passing to a full subtree, we may assume that for all $1\leq j\leq N$ and for all $\forall t\in T_{N-j}$, the iterated weak$^*$-limit $$\partial_j z(t)= w^*-\lim_{n_1}\ \dots\ w^*-\lim_{n_j} z(t\smallfrown (n_1,\dots, n_j))$$ is well defined. We also denote $\partial_0 z(t)=z(t)$. Note that $\norme{\partial_j z(t)}\leq c$ by lower semi-continuity of the norm. For all $1\leq j\leq k\leq N$ and for all $t\in T_N$ of length $\abs{t}\geq j$, we introduce $$z_{k,j}(t)=\partial_{k-j} z\left(t_{\lvert j}\right)-\partial_{k-j+1}z\left(t_{\lvert j-1}\right).$$ Also, let $z_{k,0}(t)=\partial_kz(\emptyset)$. Note that $z_{k,j}(t)$ only depends on the $j$ first coordinates of the sequence $t$ and that $\norme{z_{k,j}(t)}\leq 2c$. Moreover, we have the following properties. The proof of these results is straightforward but we will apply them often in the sequel.
\begin{prp}

For all  $t\in T_N,\ t\neq\emptyset$, we have:
 \begin{enumerate}
\item{ $f(t)=\sum_{k=1}^{\abs{t}}z(t_{\lvert k})=\sum_{k=1}^{\abs{t}}\sum_{j=0}^k z_{k,j}(t)$ }
\item{ $\forall 1\leq l\leq k\leq \abs{t},\ \norme{\sum_{j=0}^l z_{k,j}(t)}\leq c$  }
\item{ $\forall 1\leq j\leq k\leq N,\ j\leq \abs{t}$, $w^*-\lim_n z_{k,j}\left(t_{\lvert j-1}\smallfrown n\right)=0.$ }
\end{enumerate}

\end{prp}

Now let us assume that $N=3Q^M$ for some $Q>3$ and $M\geq 1$. Then for all $1\leq k\leq N$, there is a unique $1\leq M_k\leq M+1$ such that $Q^{M_k-1}\leq k <Q^{M_k}$. Thus we can define exponentially decreasing  indices $\alpha_{k,0}=k$, $\alpha_{k,r}=k-Q^r$ for $1\leq r<M_k$ and $\alpha_{k,M_k}=-1$. We consider block functions $w_{k,r}$ defined on the roof of the tree $T_N$ by $$w_{k,r}(t)=\sum_{j=\alpha_{k,r}+1}^{\alpha_{k,r-1}}z_{k,j}(t).$$ Our goal in the sequel will be to give upper and lower estimates of the quantity $$\sum_{k=1}^{N}\sum_{r=1}^{M_k}\norme{w_{k,r}(t)}$$ in a certain full subtree in order to get a contradiction when $Q$ and $M$ are sufficiently big. 

In the reflexive case, it is possible to get such estimates using a result from  \cite{KOS} where the space is embedded into a Banach space admitting a finite dimensional decomposition in which nice upper and lower $\ell_p$ and $\ell_q$  estimates hold. We will replace this result in our setting by the two following propositions.

\begin{prp}\label{estimates_below}

For all $\eta>0$, there is a full subtree $T$ of $T_N$ such that for all $1\leq L\leq N,$ for all $0\leq i_1\leq j_1<\dots<i_L\leq j_L\leq N$ and for all $N\geq k_i\geq j_i$, we have  $$\abs{\sum_{l=1}^LB_l(t)} \geq a\left(\sum_{l=1}^L \abs{B_l(t)}^q\right)^{\frac{1}{q}}-\eta$$ whenever $t$ is an element of $T$ of length $\abs{t}\geq j_L$, where $B_l$ is the block function defined by $$B_l(t)= \sum_{j=i_l}^{j_l}z_{k_l,j}(t)$$ and $a$ is the constant obtained by our considerations about the  Orlicz function  $\delta$ in the second section (lemma \ref{Orlicz_Inequalities} applied to the space $(X^{**},\abs{.})$).

\end{prp}

\begin{prp}\label{estimates_above}

For all $\eta>0$, there is a full subtree $T$ of $T_{2N}$ such that for all $1\leq L\leq N$, for all $0\leq i_1\leq j_1<\dots<i_L\leq j_L\leq N$ and for all $N\geq k_i,\geq j_i$, we have $$\norme{\sum_{l=1}^L B_l(s)-B_l(t)} \leq A\left(\sum_{l=1}^L \norme{\sum_{j=i_l}^{j_l}B_l(s)-B_l(t)}^p\right)^{\frac{1}{p}}+\eta$$ whenever $s$ and $t$ are two elements of $T_N$ of length $\abs{t}=\abs{s}\geq j_L$ such that the interlaced sequence $s\propto t=(s_1,t_1,s_2,\dots)$ belongs to $T$, where $B_l$ is the block function defined by $$B_l(t)= \sum_{j=i_l}^{j_l}z_{k_l,j}(t)$$ and $A$ is the constant obtained by our considerations about the  Orlicz function  $\overline{\rho}_X$ in the second section (lemma \ref{Orlicz_Inequalities}).

\end{prp}

The proof of these two propositions will be done in the next section to make the reading lighter. We turn to the proof of the main result.

\begin{proof}[Proof of \ref{Main_result}]

Fix $\eta>0$ and assume that the two propositions are satisfied respectively on the whole $T_N$ and $T_{2N}$ for this constant. 

First, we apply \ref{estimates_below} for every $1\leq k\leq N$ to the block functions $w_{k,r}$ with $r$ running from $1$ to $M_k$. We get $$\left(\sum_{r=1}^{M_k}\abs{w_{k,r}(t)}^q\right)^{\frac{1}{q}} \leq \frac{1}{a}\left(\abs{\sum_{r=1}^{M_k}w_{k,r}(t)} +\eta\right)\leq \frac{c+\eta}{a}$$ for every $t\in T_N$ of length $N$ because $$\sum_{r=1}^{M_k}w_{k,r}(t)=\sum_{j=0}^kz_{k,j}(t)$$ is of norm at most $c$. Then assuming that $\eta\leq c$ and using Hölder's inequality, we get $$\sum_{k=1}^{N}\sum_{r=1}^{M_k}\abs{w_{k,r}(t)}\leq \frac{2c}{a}(M+1)^{\frac{1}{q^*}}N.$$ So $$\sum_{k=1}^{N}\sum_{r=1}^{M_k}\norme{w_{k,r}(t)}\leq (M+1)^{\frac{1}{q^*}}Q^{M+1}$$ if $Q$ is was chosen bigger than $\frac{6ce}{a}$.

Second, we want to get an estimate from below. To do that, we will use some computation tricks. We start with an easy lemma. 

\begin{lm}

Let $1\leq m\leq M$. For all $Q^m\leq l\leq N-Q^m$ and for all  $s,t\in T_N$ such that $\abs{s}=\abs{t}\geq l+Q^m$ and $\abs{a_{s,t}}=l$, we have $$\norme{\sum_{k=l+1}^{l+Q^m}\sum_{r=1}^m w_{k,r}(s)-w_{k,r}(t)}\geq 2Q^m.$$

\end{lm}

Note that the condition $l\geq Q^m$ is crucial in order to ensure that the $w_{k,r}$ appearing in the sums are well defined. 

\begin{proof}

Let us recall that $z_{k,j}$ only depends on the $j$ first coordinates of the sequence. So if we take $s,t\in T_N$ satisfying the properties of the lemma and if we take $1\leq j\leq l$, then $z_{k,j}(s)=z_{k,j}(t)$ whenever $j\leq k\leq N$. Thus for every $1\leq l\leq L\leq \abs{s}$ we have $$f\left(s_{\lvert L}\right)-f\left(t_{\lvert L}\right)=\sum_{k=l+1}^{L}\sum_{j=0}^k z_{k,j}(s)-z_{k,j}(t),$$ and thus, we get $$\norme{\sum_{k=l+1}^{L}\sum_{j=0}^k z_{k,j}(s)-z_{k,j}(t)}\geq 2(L-l).$$

Moreover, we have \begin{align*} \sum_{k=l+1}^{l+Q^m}\sum_{r=1}^m w_{k,r}(s)-w_{k,r}(t) & =\sum_{k=l+1}^{l+Q^m}\sum_{j=\alpha_{k,m}+1}^{\alpha_{k,0}}z_{k,j}(s)-z_{k,j}(t) \\
&=\sum_{k=l+1}^{l+Q^m}\sum_{j=k-Q^m+1}^kz_{k,j}(s)-z_{k,j}(t)\\
&=\sum_{k=l+1}^{l+Q^m}\sum_{j=0}^kz_{k,j}(s)-z_{k,j}(t)
\end{align*}
because $k-Q^m+1\leq l+1$ whenever $l+1\leq k\leq l+Q^m$. Combining the two facts, we get the desired result.

\end{proof}

Next fix $1\leq m\leq M$, $Q^m\leq l\leq N-Q^m$ and $l+1\leq k\leq N-(Q-1)Q^{m-1}$. For every $0\leq n\leq Q-1$ and for every $t\in T_N$ of length $N$, we have $$\sum_{r=1}^{m-1}w_{k+nQ^{m-1},r}(t)=\sum_{j=k+(n-1)Q^{m-1}+1}^{k+nQ^{m-1}}z_{k+nQ^{m-1},j}(t).$$ In particular, $$\norme{\sum_{r=1}^{m-1}w_{k+nQ^{m-1},r}(t)}\leq \norme{\sum_{j=0}^{k+(n-1)Q^{m-1}}z_{k+nQ^{m-1},j}(t)}+ \norme{\sum_{j=0}^{k+nQ^{m-1}}z_{k+nQ^{m-1},j}(t)}\leq 2c.$$

Now, we apply \ref{estimates_above} to the block functions $\sum_{r=1}^{m-1}w_{k+nQ^{m-1},r}$ with $n$ running from $0$ to $Q-1$. We get \begin{align*} 
\norme{\sum_{n=0}^{Q-1}\sum_{r=1}^{m-1}w_{k+nQ^{m-1},r}(t)-w_{k+nQ^{m-1},r}(s)} &\leq A\left(\sum_{n=0}^{Q-1}\norme{\sum_{r=1}^{m-1}w_{k+nQ^{m-1},r}(t)-w_{k+nQ^{m-1},r}(s)}^p\right)^{\frac{1}{p}}+\eta \\
&\leq 4cAQ^{\frac{1}{p}}+\eta
\end{align*} for all interlacing sequences $s,t\in T_N$ of length $N$.

Thus, assuming that $\eta\leq 4cA$ and summing over $k$, we get \begin{align*}
\norme{\sum_{k=l+1}^{l+Q^m}\sum_{r=1}^{m-1} w_{k,r}(s)-w_{k,r}(t)}= & \norme{\sum_{k=l+1}^{l+Q^{m-1}}\sum_{n=0}^{Q-1}\sum_{r=1}^{m-1} w_{k+nQ^{m-1},r}(s)-w_{k+nQ^{m-1},r}(t)} \\
&\leq  8cAQ^{\frac{1}{p}}Q^{m-1} \\
&\leq Q^{m}
\end{align*} if $Q$ was chosen bigger than $(8cA)^{p^*}$, where $p^*$ is the conjugate exponent of $p$.

Combining this and the lemma, we get that whenever we take interlacing $s,t\in T_N$ of length $N$ satisfying $\abs{a_{s,t}}=l$, we have $$\norme{\sum_{k=l+1}^{l+Q^m}w_{k,m}(s)-w_{k,m}(t)}\geq Q^m,$$ and thus at least one of the quantities $\norme{\sum_{k=l+1}^{l+Q^m}w_{k,m}(s)}$ or $\norme{\sum_{k=l+1}^{l+Q^m}w_{k,m}(t)}$ is bigger than $\frac{1}{2}Q^m$. Then, using Ramsey's theorem, it is easy to get a full subtree of $T_N$ where this inequality holds for every sequence of length $N$.

Consequently, we can assume up to the successive extraction of finitely many full subtrees that for all $ t\in T_N$ of length $N$, for all $1\leq m\leq M$ and for all $Q^m\leq l\leq N-Q^m$, we have $$\norme{\sum_{k=l+1}^{l+Q^m}w_{k,m}(t)}\geq \frac{1}{2}Q^m.$$ Now take $1\leq \gamma\leq Q^{M-m}$ and let $l=\gamma Q^m$. Then $$\sum_{k=l+1}^{l+Q^m}w_{k,m}(t)=\sum_{k=\gamma Q^m+1}^{(\gamma+1) Q^m}w_{k,m}(t).$$ Using the preceding inequality and summing over $\gamma$ we get $$\sum_{\gamma=1}^{Q^{M-m}}\norme{\sum_{k=\gamma Q^m+1}^{(\gamma+1) Q^m}w_{k,m}(t)}\geq \frac{Q^M}{2}.$$ Thus, by triangular inequality, $$\sum_{k=Q^m+1}^{N}\norme{w_{k,m}(t)}\geq  \frac{Q^M}{2}.$$

Finally,  let us recall that $k\geq Q^m$ implies $M_k> m$. Thus, after reordering, we obtain $$\sum_{k=1}^{N}\sum_{m=1}^{M_k}\norme{w_{k,m}(t)}\geq \sum_{m=1}^M\sum_{k=Q^m+1}^{N}\norme{w_{k,m}(t)} \geq M\frac{Q^M}{2}.$$

Gathering the two estimates, we get that if $Q$ is bigger than some constant depending only on $a,A,e,c$ and $p^*$, we have $$M\frac{Q^M}{2} \leq (M+1)^{\frac{1}{q^*}}Q^{M+1}.$$ This gives a contradiction for $M$ large enough.

\end{proof}

\section{The upper $\ell_p$ and lower $\ell_q$ estimates in the trees.}

In this section, we prove the two propositions stated in the last section giving us estimates on the norm of the sum of block functions acting on the upper stages of the trees. Even if there are more quantifiers than usual, the first proposition is not really new and does not require quasi-reflexivity. It is indeed a standard thing now to extract a full subtree with lower $\ell_q$ estimates from a weak$^*$-null tree in a $q-AUC^*$ dual. Similar extractions can be done for upper $\ell_p$ estimates from weak-null trees in asymptotically uniformly smooth spaces, but this cannot be used here because we are looking at weak$^*$ null trees in the bi-dual of a $p-AUS$ space. We give the proof of the first proposition below for completeness and because the structure of the proof, similar to the one of the proof coming after will be easier to get since there are fewer technical arguments.

\begin{proof}[Proof of proposition \ref{estimates_below}]

We will show by induction on $L$ that for all $\xi>0$, there is a full subtree $T$ of $T_N$ such that for all $1\leq L\leq N$ and for all choice of block functions $B_1,\dots, B_L$ as in the statement of the proposition, we have $$\abs{\sum_{l=1}^LB_l(t)}\geq N_L^\delta\Big(\abs{B_1(t)},\dots,\abs{B_L(t)}\Big)-\xi$$ for all $t\in T$ of length $\abs{t}\geq j_L$, where $j_L$ corresponds to the maximal ``height'' of the block function $B_L$. Note that all our blocks $B_l(t)$ are of norm $\abs{.}$ at most $R=2Nc$. 

For $L=1$, the property is satisfied on the whole $T_N$ for all choice of $\xi>0$ because $N_1^\delta=\abs{.}$ by convention.

Now, suppose that our property it is satisfied for all choice of $\xi>0$ for a given $1\leq L\leq N-1$. Fix $\eta>0$. By the uniform continuity of $N_2^\delta$, we can find a  $\nu>0$ such that $\abs{N_2^\delta(u)-N_2^\delta(v)}\leq \frac{\eta}{2}$ whenever $\norme{u-v}_1\leq \nu$ in $\R^2$. For our later use, we assume that $\nu\leq \frac{\eta}{2}$. We may assume that the inequalities for $L$ block functions are satisfied on the whole $T_N$ for the constant $\xi=\nu$. 

\bigbreak

First fix $L$ block functions $B_1,\dots, B_L$ with $j_L\leq N-1$  and fix $t\in T_N$ with $\abs{t}=j_L$. Assuming that $\sum_{l=1}^LB_l(t)\neq 0$, we apply the lemma \ref{lm_w_conv}. There is a weak$^*$-neighborhood $V=V_{x_1^*,\dots, x_m^*;\varepsilon}$ of $0$ such that for all  $x^{**}\in V\cap RB_{(X^{**},\abs{.})}$ we have \begin{align*}
\abs{\sum_{l=1}^LB_l(t)+x^{**}} &\geq \abs{\sum_{l=1}^LB_l(t)}\left(1+\modconvunifw{X^*}{\frac{\abs{x^{**}}}{\abs{\sum_{l=1}^LB_l(t)}}}\right)-\frac{\eta}{2} \\
&= N_2^{\delta}\left(\abs{\sum_{l=1}^LB_l(t)}, x^{**}\right)-\frac{\eta}{2}
\end{align*}
From this, we easily deduce using the inequality for $L$ block functions, the definition of $N_{L+1}^{\delta}$ and our choice of $\nu$ that $$\abs{\sum_{l=1}^LB_l(t)+x^{**}}\geq N_{L+1}^\delta\Big(\abs{B_1(t)},\dots,\abs{B_L(t)},\abs{x^{**}}\Big)-\eta$$ whenever $x^{**}\in V$ and $\abs{x^{**}}\leq R$.

Our goal now is to extract a full subtree over the sequence $t$ which is fully contained in the weak$^*$-neighborhood $V$. We know that $w^*-\lim z_{k,j_L+1}(t\smallfrown n)=0$ for every $N\geq k\geq j_L+1$. Thus we can find some $N_1\geq 1$ such that $$\forall n_1\geq N_1,\ \forall N\geq k\geq j_L +1,\ z_{k,j_L+1}(t\smallfrown n_1)\in V_{x_1^*,\dots, x_m^*;\frac{\varepsilon}{2}}.$$
Then fix some $n_1\geq N_1$. Again, we know that $w^*-\lim z_{k,j_L+2}(t\smallfrown (n_1,n))=0$ for every $N\geq k\geq j_L+2$. Thus we can find some $N_2(n_1)\geq 1$ such that $$\forall n_2\geq N_2(n_1),\ \forall N\geq k\geq j_L +2,\ z_{k,j_L+2}(t\smallfrown (n_1,n_2))\in V_{x_1^*,\dots, x_m^*;\frac{\varepsilon}{4}}.$$
Iterating this procedure, we obtain a full subtree $T^{(t)}$ of $T_{N-j_L}$ such that $$\forall (n_1,\dots, n_j)\in T^{(t)},\ \forall N\geq k\geq j_L+j,\ z_{k,j_L+j}(t\smallfrown (n_1,\dots,n_j))\in V_{x_1^*,\dots, x_m^*;\frac{\varepsilon}{2^j}}.$$

Consequently, this subtree satisfies: for all choice of block function $B_{L+1}$, for all $s\in T^{(t)}$ such that $\abs{t\smallfrown s}\geq j_{L+1}$, $B_{L+1}(t\smallfrown s)\in V$ and thus $$\abs{\sum_{l=1}^LB_l(t)+B_{L+1}(t\smallfrown s)} \geq N_{L+1}\Big(\abs{B_1(t)},\dots,\abs{B_L(t)},\abs{B_{L+1}(t\smallfrown s)}\Big)-\eta.$$
Note that $B_l(t)=B_l(t\smallfrown s)$ for all $1\leq l\leq L$ because the block function $B_l$ only depends on the $j_l\leq j_L$ first coordinates of a sequence. Thus, by ``gluing'' every $T^{(t)}$ over the corresponding point $t$, we get a full subtree $T$ of $T_N$ satisfying the required property for our initial choice of block functions $B_1,\dots, B_L$. 

Since choosing $L$ block functions is equivalent to choosing integers $0\leq i_1\leq j_1<\dots<i_L\leq j_L\leq N-1$ and  $N\geq k_i\geq j_i$ a finite number of successive extractions will give us the desired inequality on a full subtree of $T_N$ for every choice of $L+1$ block functions $B_1,\dots, B_{L+1}$. 

\end{proof}

Now let us give the proof of the second proposition.

\begin{proof}[Proof of proposition \ref{estimates_above}]

Again, we will show by induction on $L$ that for all $\xi>0$, there is a full subtree $T$ of $T_{2N}$ such that for all $1\leq L\leq N$ and for all choice of block functions $B_1,\dots, B_L$, we have $$\norme{\sum_{l=1}^L D_l(s,t)} \leq N_L^{\overline{\rho}_X}\Big(\norme{D_1(s,t)},\dots,\norme{D_L(s,t)}\Big)+\xi$$ whenever $s,t\in T_N$ are of length $\abs{t}=\abs{s}\geq j_L$ and satisfies $s\propto t=(s_1,t_1,s_2,\dots)\in T$, where $D_l(s,t)$ is defined as the difference $B_l(s)-B_l(t)$. Note that these objects are all of norm at most $R=4Nc$.

Again, this is clear for $L=1$. Suppose that the property is satisfied for all choice of $\xi$ for some $1\leq L\leq N-1$. Fix $\eta>0$. By the uniform continuity of $N_2^{\overline{\rho}_X}$, we can find a $\nu>0$ such that $\abs{N_2^{\overline{\rho}_X}(u)-N_2^{\overline{\rho}_X}(v)}\leq \frac{\eta}{4}$ whenever $\norme{u-v}_1\leq \nu$ in $\R^2$. For our later use, we assume that $\nu\leq \frac{\eta}{4}$. Again, we may assume that the inequalities for $L$ differences of block functions are satisfied for every $s,t\in T_N$ of same length for the constant $\nu$.

Since $X$ is quasi-reflexive, there is a space $E$ of finite dimension such that: $X^{**}=X\oplus E$.  For all $u\in T_N$, let $z_{k,j}(u)=x_{k,j}(u)+e_{k,j}(u)$ be the associated decomposition in this sum. Also denote by $D_l^{(X)}$ and $D_l^{(E)}$ the projections of the functions $D_l$ respectively on $X$ and on $E$. By Ramsey's theorem, we may assume after passing to a full subtree that for all $1\leq j\leq k\leq N$ and  for all $u,v\in T_N$ of length $N$ we have $$\norme{e_{k,j}(u)-e_{k,j}(v)}\leq \frac{\nu}{N}.$$ Note that this inequality holds in fact whenever $e_{k,j}(u)$ and $e_{k,j}(v)$ are defined since they only depends on the $j$ first coordinates of $u$ and $v$. 

Now fix $L$ block functions $B_1,\dots, B_L$ and fix $w\in T_{2N},\ \abs{w}= 2j_L$. Also take $s,t\in T_N$ with $\abs{s}=\abs{t}= j_L$ such that $s\propto t=w$.

Again, assuming that $\sum_{l=1}^LD^{(X)}_l(s,t)\neq 0$ and applying lemma \ref{lm_smooth} there is a weak$^*$-neighborhood $V=V_{x_1^*,\dots, x_m^*;\varepsilon}$ of $0$ such that for all $x^{**}\in V\cap RB_{X^{**}}$, we have \begin{align*}
\norme{\sum_{l=1}^LD^{(X)}_l(s,t)+x^{**}} & \leq \norme{\sum_{l=1}^LD^{(X)}_l(s,t)}\left(1+\modlissunif{X}{\frac{\norme{x^{**}}}{\norme{\sum_{l=1}^LD^{(X)}_l(s,t)}}}\right)+\frac{\eta}{4} \\
&=N_2^{\overline{\rho}_X}\left(\norme{\sum_{l=1}^LD^{(X)}_l(s,t)}, \norme{x^{**}}\right)+\frac{\eta}{4}
\end{align*}

Now, we have  $\norme{\sum_{l=1}^LD^{(E)}_l(s,t)}\leq \nu$ thanks to the concentration in $E$ obtained before. Thus \begin{align*}
\norme{\sum_{l=1}^LD_l(s,t)+x^{**}} & \leq N_2^{\overline{\rho}_X}\left(\norme{\sum_{l=1}^LD_l(s,t)}+\nu, \norme{x^{**}}\right)+\nu+\frac{\eta}{4} \\
&\leq N_2^{\overline{\rho}_X}\left(\norme{\sum_{l=1}^LD_l(s,t)}, \norme{x^{**}}\right)+\frac{3\eta}{4} \\
&\leq N_{L+1}^{\overline{\rho}_X}\Big(\norme{D_1(s,t)},\dots,\norme{D_L(s,t)},\norme{x^{**}}\Big)+\eta
\end{align*} whenever $x^{**}\in V$ and $\abs{x^{**}}\leq R$ using the inequality for $L$ difference functions, the definition of $N_{L+1}^{\overline{\rho}_X}$ and the choice of $\nu$.

Using the same arguments as in the proof of the first proposition, we can find a full subtree $T^{(w)}\subset T_{N-j_L}$ such that for all $u=(u_1,\dots, u_j)\in T^{(w)}$ and for all $k\geq j_L+j$, $z_{k,j_L+j}(s\smallfrown u)\in V_{\frac{\varepsilon}{4^j}}$ and $z_{k,j_L+j}(t\smallfrown u)\in V_{\frac{\varepsilon}{4^j}}$.

Thus, for all choice of block function $B_{L+1}$, for all $u,v\in T^{(w)}$ such that $\abs{s\smallfrown u}=\abs{t\smallfrown v}\geq j_{L+1}$, $D_{L+1}(s\smallfrown u,t\smallfrown v)\in V$ and so $$\norme{\sum_{l=1}^LD_l(s,t)+D_{L+1}(s\smallfrown u, t\smallfrown v)} \leq N_{L+1}^{\overline{\rho}_X}\Big(\norme{D_1(s,t)},\dots,\norme{D_L(s,t)},\norme{D_{L+1}(s\smallfrown u, t\smallfrown v)}\Big)+\eta.$$

Noting that the function $D_l$ only depends on the $j_l\leq j_L$ first coordinates of both sequences, and considering the full subtree $T_2^{(w)}$ of $T_{2(N-j_L)}$  for which each sequence of even length is obtained by interlacing two sequences of $T^{(w)}$, we can conclude in the same way as in the preceding proof.

\end{proof}

\section{Application to the non-embeddability into James spaces.}

Let $p\in(1,\infty)$ of conjugate exponent $p^*$. The subspace of $c_0$ composed of all sequences of finite $p$-variation endowed with the $p$-variation norm is called \emph{$p$-James space} and denoted by $\mathcal{J}_p$. Let us recall that the $p$-variation of a sequence $x=(x_n)_{n\geq 1}$ is the quantity $$\norme{x}_{\mathcal{J}_p}^p=\sup_{k\geq 1}\ \sup_{1\leq n_1<\dots<n_k}\ \sum_{i=1}^{k-1} \abs{x_{n_{i+1}}-x_{n_i}}^p.$$

It is a well known fact that $\mathcal{J}_p$ is a quasi-reflexive Banach space admitting an equivalent $p$-AUS norm and such that $J_p^*$ admits an equivalent $p^*$-AUS norm. It is also known that $\mathcal{J}^{**}_p$ admits an equivalent $p$-AUS norm. The reader can consult \cite{LPP} for more information and for references. As a consequence of our main result, we get the following:

\begin{thm}

The family $(T_N)_{N\geq 1}$ does not embed equi-Lipschitz in $\mathcal{J}_p$ and it does not embed equi-Lipschitz in $\mathcal{J}_p^*$.

\end{thm}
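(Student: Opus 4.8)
The plan is to invoke Theorem \ref{Main_result} directly, so the entire task reduces to verifying that both $\mathcal{J}_p$ and $\mathcal{J}_p^*$ satisfy the hypotheses of that theorem: namely, quasi-reflexivity together with $S_Z(\,\cdot\,)\leq\omega$ and $S_Z((\,\cdot\,)^*)\leq\omega$. All of the needed structural facts about James spaces are recalled in the paragraph preceding the statement, so this is essentially a bookkeeping exercise in translating ``admits an equivalent $p$-AUS norm'' into ``has Szlenk index at most $\omega$''.

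First I would handle $\mathcal{J}_p$. It is quasi-reflexive (in fact $\mathcal{J}_p^{**}/\mathcal{J}_p$ is one-dimensional). Since $\mathcal{J}_p$ admits an equivalent $p$-AUS norm, Theorem \ref{renorming} (more precisely, the implication (2)$\Rightarrow$(3), noting that $\mathcal{J}_p$ is separable) gives $S_Z(\mathcal{J}_p)\leq\omega$. For the dual, the recalled fact is that $\mathcal{J}_p^*$ admits an equivalent $p^*$-AUS norm; again by Theorem \ref{renorming} this yields $S_Z(\mathcal{J}_p^*)\leq\omega$. So both Szlenk-index hypotheses of Theorem \ref{Main_result} hold for $X=\mathcal{J}_p$, and we conclude that $(T_N)_{N\geq 1}$ does not equi-Lipschitz embed into $\mathcal{J}_p$.

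Next I would treat $\mathcal{J}_p^*$ as the ambient space $X$ in Theorem \ref{Main_result}. Here quasi-reflexivity of $\mathcal{J}_p^*$ follows from that of $\mathcal{J}_p$ (the dual of a quasi-reflexive space is quasi-reflexive, since $(\mathcal{J}_p^*)^{**}/\mathcal{J}_p^* \cong (\mathcal{J}_p^{**}/\mathcal{J}_p)^*$ is finite-dimensional). We need $S_Z(\mathcal{J}_p^*)\leq\omega$, which we already have from the previous paragraph, and $S_Z((\mathcal{J}_p^*)^*)=S_Z(\mathcal{J}_p^{**})\leq\omega$. The latter is exactly the recalled fact that $\mathcal{J}_p^{**}$ admits an equivalent $p$-AUS norm, together with Theorem \ref{renorming} and separability of $\mathcal{J}_p^{**}$. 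Hence Theorem \ref{Main_result} applies with $X=\mathcal{J}_p^*$ and gives the non-embeddability of $(T_N)_{N\geq 1}$ into $\mathcal{J}_p^*$.

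There is no real obstacle: the only points requiring a word of care are (i) checking that the separability assumption in Theorem \ref{renorming} is met at each of the three levels $\mathcal{J}_p$, $\mathcal{J}_p^*$, $\mathcal{J}_p^{**}$ (all separable, since $\mathcal{J}_p$ is separable and all its iterated duals are — $\mathcal{J}_p^{**}$ is a finite-codimensional extension of the separable space $\mathcal{J}_p$), and (ii) correctly propagating quasi-reflexivity to the dual, which is the standard observation that $X$ quasi-reflexive implies $X^*$ quasi-reflexive via the natural identification $X^{***}/X^* \cong (X^{**}/X)^*$. Both are routine, so the proof is short: two applications of Theorem \ref{Main_result}, one to $\mathcal{J}_p$ and one to $\mathcal{J}_p^*$, with Theorem \ref{renorming} supplying the Szlenk-index bounds from the known AUS renormings.
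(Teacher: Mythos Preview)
Your proposal is correct and follows exactly the route the paper takes: the paper states the theorem ``as a consequence of our main result'' immediately after recalling that $\mathcal{J}_p$, $\mathcal{J}_p^*$, and $\mathcal{J}_p^{**}$ each admit equivalent AUS norms of the appropriate exponents, and your write-up simply makes explicit the two applications of Theorem~\ref{Main_result} (to $X=\mathcal{J}_p$ and $X=\mathcal{J}_p^*$) together with the use of Theorem~\ref{renorming} to convert those AUS renormings into the Szlenk-index bounds. The separability and dual-quasi-reflexivity checks you add are routine and correct.
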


\bibliographystyle{plain}
\bibliography{biblio}
\nocite{*}

\end{document}